\documentclass[12pt]{amsart}

\usepackage{amsmath,amssymb,amsthm}

\usepackage{microtype}

\usepackage{mathtools}

\usepackage{enumitem}

\usepackage[numbers,sort&compress]{natbib}

\usepackage{hyperref}

\setlist[enumerate]{label=\upshape(\roman*)}

\theoremstyle{plain}

\newtheorem{theorem}{Theorem}[section]

\newtheorem{lemma}[theorem]{Lemma}

\newtheorem{proposition}[theorem]{Proposition}

\newtheorem{corollary}[theorem]{Corollary}

\theoremstyle{definition}

\theoremstyle{remark}

\newtheorem{remark}[theorem]{Remark}

\counterwithin{equation}{section}

\allowdisplaybreaks

\newcommand{\N}{\mathbb N}

\newcommand{\Z}{\mathbb Z}

\newcommand{\E}{\mathbb E}

\newcommand{\Prob}{\mathbb P}

\newcommand{\Fcal}{\mathcal F}

\newcommand{\Wcal}{\mathcal W}

\newcommand{\logplus}{\log^{+}}

\DeclareMathOperator{\Var}{Var}

\hypersetup{pdftitle={Uniform lower bounds for the average length of finite Pierce expansions},pdfauthor={Min Woong Ahn}}

\begin{document}

\title[Average length of finite Pierce expansions]{Uniform lower bounds for the average length of finite Pierce expansions}

\author{Min Woong Ahn}

\address{Department of Mathematics Education, Dankook University, 152, Jukjeon-ro, Suji-gu, Yongin-si, Gyeonggi-do 16890, Republic of Korea}

\email{mwahn@dankook.ac.kr}

\date{October 19, 2026}

\subjclass[2020]{Primary 11A67; Secondary 11K38, 11N37}

\keywords{Pierce expansion, finite expansion, remainder algorithm, average length, arithmetic progression, discrepancy}

\begin{abstract}

For integers $1\leq a\leq b$, the length of the finite Pierce expansion of $a/b$ is the number of
steps of the iteration $r\mapsto b\bmod r$ which starts at $a$ and ends at zero. Erd\H{o}s and
Shallit (1991) proved that the average of these lengths over $1\leq a\leq b$ is bounded below by a
positive constant times $\log\log b$, and suggested that it is at least of order $\log b$. In this
paper, we prove that the average length is at least $\sqrt{2\log b}-3(\log b)^{1/4}-11$ for every
$b\geq2$. We also show that, for each $0<c<\sqrt2$, the proportion of
numerators $a$ for which the length is less than $c\sqrt{\log b}$ tends to zero exponentially fast in
$\sqrt{\log b}$, with an explicit rate. Our approach refines the interval-counting argument of
Erd\H{o}s and Shallit by restricting the product of the initial digits. As applications, we obtain
similar bounds for reduced fractions and for numerators in a fixed congruence class.

\end{abstract}

\maketitle

\tableofcontents

\section{Introduction}\label{sec:introduction}

As is well known, the Pierce expansion expresses $x\in(0,1]$ in the form
\begin{equation}\label{eq:intro-Pierce}
x=\sum_{k\in\N}\frac{(-1)^{k+1}}{d_1(x)\cdots d_k(x)}
=\frac1{d_1(x)}-\frac1{d_1(x)d_2(x)}+\frac1{d_1(x)d_2(x)d_3(x)}-\cdots
\end{equation}
with the convention $1/\infty=0$, where the digit sequence $(d_k(x))_{k\in\N}$ is defined in
Section~\ref{sec:preliminaries}. If $x$ is irrational, then $(d_k(x))_{k\in\N}$ is a strictly
increasing sequence of positive integers. In the case where $x$ is rational, the digit sequence
consists of finitely many strictly increasing positive integers, followed by $\infty$'s. For further
basic notions and results on Pierce expansions, we refer the reader to~\cite{Ahn23,Pie29,Sha86}.

For integers $1\leq a\leq b$, the finite Pierce expansion of $a/b$ is closely related to the
remainder iteration
\begin{equation}\label{eq:intro-remainders}
r_0\coloneqq a,\qquad r_{j+1}\coloneqq b\bmod r_j\quad(r_j>0).
\end{equation}
Since every positive remainder is smaller than its predecessor, the iteration terminates after
finitely many steps. Let $\ell(a,b)$ denote the least $j\in\N$ such that $r_j=0$; that is, the final
division, which produces zero, is also counted. By Lemma~\ref{lem:algorithm} below, $\ell(a,b)$ is
equal to the number of finite digits of $a/b$, i.e., the length of the finite Pierce expansion of
$a/b$. Following Erd\H{o}s and Shallit~\cite{ES91}, we include the numerator $a=b$, for which
$\ell(b,b)=1$, and consider the average
\begin{equation}\label{eq:intro-average}
Q(b)\coloneqq\frac1b\sum_{a=1}^{b}\ell(a,b).
\end{equation}

The iteration~\eqref{eq:intro-remainders} appears in Shallit~\cite{Sha86} and as Algorithm~6 in
Mays~\cite{Mays87}. Erd\H{o}s and Shallit~\cite[Observation~1 and p.~45]{ES91} noted that its
relation to Pierce expansions is analogous to the relation between the Euclidean algorithm and
continued fractions. It should be pointed out, however, that $b$ remains fixed throughout the
iteration.

For the maximal length, it is known that
\[
c_0\frac{\log b}{\log\log b}\leq\max_{1\leq a\leq b}\ell(a,b)\leq C_\varepsilon b^{1/3-2/177+\varepsilon}
\]
for each $0<c_0<1$, each $\varepsilon>0$, and all sufficiently large $b$, where $C_\varepsilon>0$
depends only on $\varepsilon$. The upper bound with the exponent $1/3+\varepsilon$ is due to
Erd\H{o}s and Shallit~\cite[Theorem~2]{ES91}. Ke\v{s}elj~\cite[Theorem~2]{Kes96} refined this to
$(\sqrt[3]{18}+o(1))\,b^{1/3}(\log b)^{1/3}$ as $b\to\infty$, and Chase and
Pandey~\cite[Theorem~1.1]{CP22} improved the exponent. The lower bound was proved by
Ke\v{s}elj~\cite[Theorem~3]{Kes96}; a bound of the same order was later proved by Chase and
Pandey~\cite[Theorem~1.2]{CP22} (see also~\cite[Theorem~1.5]{BGJK26}). In addition,
Erd\H{o}s and Shallit~\cite[Theorem~6]{ES91} showed that $\max_{1\leq a\leq b}\ell(a,b)>c\log b$
for some absolute constant $c>0$ and infinitely many $b$. In the other direction,
Shallit~\cite[p.~38]{Sha86} suggested that $\max_{1\leq a\leq b}\ell(a,b)=O(\log b)$, and
Erd\H{o}s and Shallit~\cite[p.~44]{ES91} believed that it is $O((\log b)^2)$. Recently, Baraskar and
Vukusic~\cite{BV26} and Baraskar, Gokhale, Jain, and Kie\.{z}un~\cite{BGJK26} studied iterated sets
of remainders, which are closely related to the maximal length.

In this paper, our main focus is on the average~\eqref{eq:intro-average}. Erd\H{o}s and
Shallit~\cite[Theorem~7]{ES91} proved that $Q(b)$ is bounded below by a positive constant times
$\log\log b$. They also suggested that $Q(b)\geq c\log b$ for some absolute constant $c>0$ and all
sufficiently large $b$~\cite[p.~44]{ES91}. Our first main result improves the lower bound of
Erd\H{o}s and Shallit as follows.

\begin{theorem}\label{thm:mean-grid}

For every integer $b\geq2$, we have
\begin{equation}\label{eq:main-mean}
Q(b)\geq\sqrt{2\log b}-3(\log b)^{1/4}-11.
\end{equation}
In particular,
\begin{equation}\label{eq:mean-liminf}
\liminf_{b\to\infty}\frac{Q(b)}{\sqrt{\log b}}\geq\sqrt2.
\end{equation}

\end{theorem}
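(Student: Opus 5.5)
The plan is to follow the interval-counting strategy of Erd\H{o}s and Shallit, with one change: in the bad set we also split according to the size of the product of the initial digits. Fix $k\in\N$ and a threshold $N$; both will be chosen in terms of $L\coloneqq\log b$. We bound the number of numerators $a$ with $\ell(a,b)<k$. Since $\ell\geq0$, this gives
\[
Q(b)\geq k\Bigl(1-\tfrac1b\#\{a\leq b:\ell(a,b)<k\}\Bigr).
\]
By Lemma~\ref{lem:algorithm}, $\ell(a,b)=j$ means that $a/b$ has exactly $j$ finite Pierce digits $d_1<\dots<d_j$. Equivalently, $a/b$ equals the terminating alternating sum in \eqref{eq:intro-Pierce}, which is determined by the tuple $(d_1,\dots,d_j)$. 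Distinct tuples give distinct rationals. Write $D_i\coloneqq d_1\cdots d_i$.

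We split the numerators with $\ell(a,b)=j<k$ into two classes.

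\emph{Small product, $D_j\leq N$.} The number of such $a$ is at most the number of strictly increasing tuples of length $j<k$ with product at most $N$. I would bound this by $N\sum_{j<k}(\log N)^{j-1}/(j-1)!$, or better by the sharper $N(\log N)^{j-1}/((j-1)!)^2$-type estimate coming from the ordering of the digits. Taking $N=b\,e^{-\lambda\sqrt L}$ for a suitable $\lambda$ makes this contribution $o(b)$.

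\emph{Large product, $D_j>N$.} Let $i\leq j$ be the first index with $D_i>N$. Then $a/b$ lies in the level-$i$ Pierce cylinder determined by $(d_1,\dots,d_i)$. That cylinder is an interval of length $\asymp1/(D_i(d_i+1))$, so it contains at most $b/(D_i d_i)+1$ numerators. The "$+1$" terms are controlled by the number of tuples with $D_{i-1}\leq N$, which is the same count as in the first class. The main terms add up to
\[
b\sum_{i<k}\ \sum_{\substack{d_1<\dots<d_i\\ D_{i-1}\leq N<D_i}}\frac{1}{D_i\,d_i}.
\]
To handle this sum, I would fix $(d_1,\dots,d_{i-1})$ and sum over $d_i>N/D_{i-1}$, which gives $\ll D_{i-1}/N\cdot 1/D_{i-1}=1/N$ per prefix. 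Here the factor $1/d_i$ coming from the cylinder length is exactly what is needed. Alternatively, a Rankin-type trick applies: insert $(D_i/N)^{\sigma}$ and use
\[
\sum_{d_1<\dots<d_i}\prod_t d_t^{\sigma-1}\leq\frac{\zeta(1-\sigma)^i}{i!}
\]
with a small $\sigma$ of order $1/\sqrt L$, which yields a bound of the form $\sum_{i<k}(c\sqrt L)^i/i!\cdot e^{-\sigma\log N}$.

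The final step is to balance the two terms. The product restriction essentially forces $\log D\approx L$ for typical $a$, while an increasing tuple of length $j$ with $\log D_j\approx L$ has "volume" $L^{j}/(j!)^2$ or so. This is small compared with $e^{L}$, uniformly as long as $j\leq\sqrt{2L}-O(L^{1/4})$; the constant $\sqrt2$ appears when $j^2/2\approx L$ is optimized in Stirling's formula. So I would take $k=\lfloor\sqrt{2L}-3L^{1/4}\rfloor$ and choose $\sigma$ and $N$ so that the exceptional proportion is at most about $e^{-cL^{1/4}}$. Then
\[
Q(b)\geq k\bigl(1-e^{-cL^{1/4}}\bigr)\geq\sqrt{2L}-3L^{1/4}-11,
\]
where the constant $11$ absorbs the floor, the $+1$ terms, and small $b$ (for small $b$ the right side of \eqref{eq:main-mean} is negative, so the bound is trivial). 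The limit \eqref{eq:mean-liminf} then follows immediately.

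The main obstacle is the large-product estimate with fully explicit constants. It requires a precise, uniform bound for sums over strictly increasing digit tuples, with exponent $\sqrt2$ and correction $O(L^{1/4})$. I would try Rankin's method with the $i!$ from ordering first, and check that the optimization of $\sigma$ against $k$ produces the stated $\sqrt{2\log b}$ rather than a weaker constant.
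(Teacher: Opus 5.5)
\textbf{The proposal has two gaps: the large-product estimate is never established, and the single-threshold step cannot give the stated explicit bound.}

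\textbf{First gap: the large-product class.} Your decomposition is sound as far as it goes: $a\mapsto(d_1,\dots,d_j)$ is injective, and the small-product class is handled by Lemma~\ref{lem:word-count} with $N=be^{-\lambda\sqrt L}$, $\lambda>2$. But the large-product estimate carries all the content, and both routes you sketch fail.
\begin{enumerate}
\item You bound each prefix $\tau=(d_1,\dots,d_{i-1})$ by $\ll b/N$ and sum over prefixes with $P(\tau)\leq N$. Summing that bound over the $\lfloor N\rfloor$ prefixes $(d_1)$ with $d_1\leq N$ alone already gives order $b$, which is trivial.
\item The Rankin variant rests on $\sum_d d^{\sigma-1}$, which diverges for $\sigma>0$. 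So $\zeta(1-\sigma)^i/i!$ is not a majorant.
\item The ``$+1$'' terms are finite only after merging, for each $\tau$, the infinitely many cylinders with $d_i>N/P(\tau)$ into one interval.
\end{enumerate}
Done correctly, $\tau$ contributes at most about $b\min\{1/N,|J_\tau|\}$. The main term is then $b$ times the Lebesgue measure of $\{x:P_{k-1}(x)>N\}$. Bounding this requires the distribution of $\log P_n(X)$ for uniform $X$; this, and not a count of tuples, is where $\sqrt2$ comes from. The paper gets it from Shallit's Markov chain (Proposition~\ref{prop:digit-law}) realized by independent exponentials. This gives $\log P_n\leq W_n+\log(n!)$ with $\E W_n=n(n+1)/2$ and $\Var W_n=V_n$ (Lemmas~\ref{lem:coupling-bound} and~\ref{lem:deviations}).

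\textbf{Second gap: one threshold is not enough.} Even with that input, the single-threshold bound $Q(b)\geq k(1-p_k)$, where $p_k\coloneqq\frac1b\#\{a\leq b:\ell(a,b)<k\}$, cannot yield \eqref{eq:main-mean}.
\begin{itemize}
\item In the coupling \eqref{eq:uniform-coupling} one also has $\log D_n\geq\sum_{i\leq n}Y_i-\log2$. So $\log P_n$ is distributed as $W_n+O(n\log n)$.
\item $W_n$ has standard deviation $\sqrt{V_n}\asymp n^{3/2}\asymp L^{3/4}$ for $n\approx\sqrt{2L}$.
\item That is the same order as the gap $\log N-\E\log P_{k-1}\approx3\sqrt2L^{3/4}$ for your $k=\lfloor\sqrt{2L}-3L^{1/4}\rfloor$. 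By the central limit theorem, $\Prob(P_{k-1}>N)$ tends to a positive constant.
\end{itemize}
The bound your argument gives for $p_k$ is at least $\Prob(P_{k-1}>N)$, so it does not decay like $e^{-cL^{1/4}}$. The resulting loss $kp_k\asymp\sqrt L$ is not absorbed by the constant $11$. Lowering $k$ until $p_k=O(L^{-1/4})$ costs more than $3L^{1/4}$. A single threshold $k=\lfloor(\sqrt2-\varepsilon)\sqrt L\rfloor$ does give \eqref{eq:mean-liminf}. It is also how the paper proves Theorem~\ref{thm:short-grid}, where the gap is of order $L$.

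\textbf{The fix.} To get \eqref{eq:main-mean}, use all thresholds at once:
\[
Q(b)=\sum_{n\geq1}\frac1b\#\{a:\ell(a,b)\geq n\}.
\]
Count each $a$ in each of the at most $\ell(a,b)-1$ cylinders $J_\sigma$, $\sigma\in\Wcal(B)$, that contain it (Remark~\ref{rem:endpoints}). This gives Lemma~\ref{lem:transfer}(i), $Q(b)\geq1+\sum_n\Prob(P_n\leq B)-H(B)/b$. Here the discrepancy errors of all levels together total only $H(B)/b$. Chebyshev then gives $\sum_{n\leq m}\Prob(P_n>B)\leq1+2\sqrt{m/3}$. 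This sum of tails is what produces the $3L^{1/4}$ term, while $B=be^{-3\sqrt L}$ costs only $3\sqrt2$.
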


The constant $\sqrt2$ in~\eqref{eq:mean-liminf} can be explained by the law of large numbers for
the Pierce expansion digits due to Shallit~\cite[Theorem~16]{Sha86}, which states that
$(d_n(x))^{1/n}\to e$ as $n\to\infty$ for Lebesgue-almost every $x\in[0,1]$. For such $x$, we have
$\log(d_1(x)\cdots d_n(x))/n^2\to1/2$ as $n\to\infty$, and so the largest $n$ with
$d_1(x)\cdots d_n(x)\leq b$ is asymptotic to $\sqrt{2\log b}$ as $b\to\infty$. Since our argument
only uses initial blocks of digits whose product does not exceed $b$, the constant $\sqrt2$ reflects
a limitation of our method. In fact, the lower bound of order $\log b$ suggested by Erd\H{o}s and
Shallit, which remains open, would imply that the lower limit in~\eqref{eq:mean-liminf} is
infinite.

Our second main result shows that, for each $0<c<\sqrt2$ and each sufficiently large $b$, all but a
small proportion of the numerators $a$ satisfy $\ell(a,b)\geq c\sqrt{\log b}$. For each
$0<c<\sqrt2$, we define
\begin{equation}\label{eq:kappa}
\kappa_c\coloneqq\frac{(1-c^2/2)^2}{8\bigl((c+1)^3+(c+1)\bigr)}.
\end{equation}
Note that $0<\kappa_c<1/16$.

\begin{theorem}\label{thm:short-grid}

Let $0<c<\sqrt2$. Then, there exists an integer $b_c\geq2$ such that
\begin{equation}\label{eq:short-grid}
\#\{1\leq a\leq b:\ell(a,b)<c\sqrt{\log b}\}
\leq3b\exp\bigl(-\kappa_c\sqrt{\log b}\bigr)
\end{equation}
for every integer $b\geq b_c$.

\end{theorem}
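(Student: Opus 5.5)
We bound the number of numerators $a$ whose first $n\coloneqq\lceil c\sqrt{\log b}\,\rceil$ remainders are all positive and whose initial digits have product at most $b$. The set in~\eqref{eq:short-grid} is then contained in the union of two sets: the numerators whose expansion ends early, and the numerators whose first $n$ digits have an unusually large product. Throughout we use the interval description of Lemma~\ref{lem:algorithm}. Writing $r_j=b\bmod r_{j-1}$ and $d_j=\lfloor b/r_{j-1}\rfloor$, the numerators $a$ with a prescribed initial digit string $(d_1,\dots,d_k)$ form a set of consecutive integers. Its length is $b/(d_1\cdots d_k(d_k+1))+O(1)$, and this approximation is reliable as long as $d_1\cdots d_k(d_k+1)\le b$. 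This is the refinement of the Erd\H{o}s--Shallit interval counting: we only count digit strings whose product stays below $b$, so that the error terms are harmless.

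\textbf{Step 1 (reduction).} If $\ell(a,b)<n$, then the digit string of $a/b$ has length $k<n$ and is a cylinder of the finite expansion. We split according to whether $P_k=d_1\cdots d_k\le b\,e^{-T}$ for a parameter $T\asymp\sqrt{\log b}$ chosen at the end.
\begin{itemize}
\item[(a)] If the product is small, the string terminates with $r_k=0$, which forces $d_k\mid b$ in a strong sense. The number of such $a$ is at most the number of digit strings with product at most $be^{-T}$, which is $\exp(O(\sqrt{\log b}\log\log b))$ times a divisor bound. This is $o(b e^{-\kappa\sqrt{\log b}})$.
\item[(b)] If the product is large, i.e.\ $\log P_k>\log b-T$ with $k<n$, then since digits are strictly increasing we get some $k<n$ with $\sum_{j\le k}\log d_j$ exceeding $\log b-T\ge (1-c^2/2)\log b+\tfrac{c^2}{2}\log b-T$. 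Here the expected size of $\log P_k$ is about $k^2/2\le c^2\log b/2$.
\end{itemize}

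\textbf{Step 2 (large deviations for case (b)).} Using the lengths $\approx b/(P_{k-1}d_k(d_k+1))$, the proportion of $a$ with prescribed first $k-1$ digits and $d_k\ge m$ is at most $(1+o(1))d_{k-1}/m$ times the proportion for the prefix. Hence, up to error terms, the digits behave like the Pierce/Shallit chain with $\Prob(d_k\ge m\mid d_{k-1})\le d_{k-1}/m$. We apply an exponential Markov inequality. Computing $\E[(d_k/d_{k-1})^{\lambda}\mid d_{k-1}]\le C/(1-\lambda)$ for $0<\lambda<1$ gives
\[
\Prob\Bigl(\log P_k-\tfrac{k(k+1)}2\ge x\Bigr)\le \exp(-\lambda x+kO(1)).
\]
Equivalently, one may telescope $\log d_j=\sum_{i\le j}\log(d_i/d_{i-1})$, so that $\log P_k=\sum_i (k-i+1)\log(d_i/d_{i-1})$ is a weighted sum of roughly independent exponential-type variables with weights up to $k\le (c+1)\sqrt{\log b}$.
\begin{itemize}
\item Choose $\lambda\approx x/(\text{variance})$, where the variance is $\sum_i (k-i+1)^2\asymp k^3$ plus lower-order terms in $k$.
\item This produces the denominator $(c+1)^3+(c+1)$ in~\eqref{eq:kappa}, while the numerator $(1-c^2/2)^2$ comes from $x\approx(1-c^2/2)\log b$.
\item The factor $8$ absorbs the slack in the Bernstein-type bound and the choice of $T$.
\end{itemize}
Summing over $k<n$ costs only a factor $n$, which is absorbed into the constant $3$ and the choice of $b_c$.

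\textbf{Main obstacle.} The hard step is making Step 2 rigorous for integer counting rather than for Lebesgue measure. The $O(1)$ boundary errors in each interval accumulate over all digit strings. I would handle this by stopping the count at the first $k$ with $P_k(d_k+1)>be^{-T}$, so that every interval used has length at least $e^{T}$ and the relative errors are $O(e^{-T})$. The total number of strings involved is at most $\exp(O(\sqrt{\log b}\,\log\log b))$, and a careful choice of $T$ (e.g.\ $T=2\kappa_c\sqrt{\log b}$) balances these errors against the deviation bound.
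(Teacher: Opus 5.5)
There is a genuine gap in the error bookkeeping, which is exactly where the numerators $a/b$ have to be compared with Lebesgue measure. Your count of the digit strings involved, $\exp(O(\sqrt{\log b}\log\log b))$, is false. Every one-letter word $(d)$ with $d\le be^{-T}$ has product at most $be^{-T}$, and every $(d)$ with $d(d+1)\le be^{-T}$ survives your stopping rule, so there are at least $be^{-T}$, respectively $\sqrt{be^{-T}}$, such strings. In fact the number of admissible words with product at most $B$ is $H(B)=Be^{(2+o(1))\sqrt{\log B}}$, so Rankin's bound $H(B)\le 2B\exp(2\sqrt{\log B})$ in Lemma~\ref{lem:word-count} is essentially sharp.

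Your case (a) bounds the terminated numerators by the number of strings with product at most $be^{-T}$, and for $T=2\kappa_c\sqrt{\log b}$ this gives nothing: since $\kappa_c<1/16$, that number is $b\exp((2-2\kappa_c+o(1))\sqrt{\log b})$, which exceeds $b$. With this kind of count you need $T$ at least about $(2+\kappa_c)\sqrt{\log b}$. The paper takes $B=be^{-3\sqrt{\log b}}$, so that $H(B)/b\le 2e^{-\sqrt{\log b}}$, and checks that $\log B-M_n\ge\frac12(1-c^2/2)\log b$ still holds for $n=\lceil c\sqrt{\log b}\rceil$. If you prefer to stop by interval length, the right count comes from disjointness: for each fixed length, at most $be^{-T}$ of the intervals $J_\sigma$ satisfy $b|J_\sigma|\ge e^T$. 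Case (a) would then have to be reorganized around $P_k(d_k+1)$ rather than $P_k$.

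Step 2 is also not justified as written. Let $J$ be the cylinder of a prefix $(d_1,\ldots,d_{k-1})$. The number of numerators in $J$ whose next digit is at least $m$ is $b|J|(d_{k-1}+1)/m+O(1)$. For large $m$ the $O(1)$ dominates, so the relative comparison $(1+o(1))d_{k-1}/m$ fails. Since under the counting measure $d_k$ can be as large as $b$, the conditional exponential moments are not controlled without truncation. Moreover, with weights up to $k$ you need $\lambda<1/k$, so the error term in your display must be quadratic in $\lambda$: a term $kO(1)$ has the same order $\sqrt{\log b}$ as $\lambda x$.

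The paper avoids this obstacle entirely, because it never does probability under the counting measure. For every word $\sigma$ of length exactly $n$ with $P(\sigma)\le B$, the open interval $J_\sigma$ contains at least $b|J_\sigma|-1$ numerators, each with $\ell(a,b)>n$ by Remark~\ref{rem:endpoints}. Summing over these disjoint intervals gives $\#\{a:\ell(a,b)<n\}\le b\Prob(P_n>B)+H(B)$, which is Lemma~\ref{lem:transfer}\textup{(ii)}. The large-deviation estimate is then needed only for Lebesgue measure. There the exact coupling $D_{k+1}=\lfloor (D_k+1)/U_{k+1}\rfloor$ gives $\log P_n\le W_n+\log(n!)$, with $W_n$ a weighted sum of independent exponentials, and the Bernstein bound $\exp(-t^2/(2(V_n+nt)))$ produces $\kappa_c$. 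This one-sided transfer also makes your case (a) and the union bound over $k<n$ unnecessary, since $P_k$ increases with $k$.
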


In fact, Theorems~\ref{thm:mean-grid} and~\ref{thm:short-grid} are special cases of the following
more general result, in which the fractions $a/b$, $1\leq a\leq b$, are replaced by a finite
multiset of rational numbers with small discrepancy. For a rational number $x\in(0,1]$, let $\ell(x)$
denote the length of its finite Pierce expansion, so that $\ell(a/b)=\ell(a,b)$ by
Lemma~\ref{lem:algorithm}. Here and throughout, cardinalities and sums over a finite multiset are
counted with multiplicity.

\begin{theorem}\label{thm:discrepancy}

Let $\Fcal$ be a finite multiset of rational numbers in $(0,1]$ with cardinality $N\geq1$, and let
$\Delta>0$ satisfy
\begin{equation}\label{eq:discrepancy}
\bigl|\#(\Fcal\cap(u,v))-N(v-u)\bigr|\leq\Delta
\qquad(0\leq u<v\leq1).
\end{equation}
Define $R\coloneqq N/\Delta$ and $L\coloneqq\logplus R$, where $\logplus u\coloneqq\max\{0,\log u\}$
for $u>0$. Then, the following hold.
\begin{enumerate}
\item We have
\begin{equation}\label{eq:general-mean}
\frac1N\sum_{x\in\Fcal}\ell(x)
\geq\sqrt{2L}-3L^{1/4}-11.
\end{equation}
\item For each $0<c<\sqrt2$, there exists $R_c>1$, depending only on $c$, such that
\begin{equation}\label{eq:general-short}
\frac1N\#\{x\in\Fcal:\ell(x)<c\sqrt{\log R}\}
\leq3\exp\bigl(-\kappa_c\sqrt{\log R}\bigr)\qquad(R\geq R_c).
\end{equation}
\end{enumerate}

\end{theorem}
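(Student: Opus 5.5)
We will work with cylinder sets of the Pierce expansion. For a strictly increasing tuple of positive integers $(\sigma_1,\dots,\sigma_n)$, the set $I(\sigma_1,\dots,\sigma_n)$ of $x\in(0,1]$ with $d_k(x)=\sigma_k$ for $k\le n$ is an interval of length
\[
\frac{1}{\sigma_1\cdots\sigma_n(\sigma_n+1)}.
\]
Its endpoints are the two rationals whose expansions terminate at $\sigma_n$ or at $\sigma_n+1$. Any rational $x$ in the open interior has at least $n+1$ finite digits, since a rational with exactly $n$ digits $\sigma_1,\dots,\sigma_n$ is an endpoint. So $\ell(x)\ge n+1$ whenever $x$ lies in the interior of a cylinder of rank $n$. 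Applying the discrepancy hypothesis~\eqref{eq:discrepancy} to each open interior, the number of elements of $\Fcal$ in it is at least $N|I|-\Delta$.

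The plan is to fix a depth $n$ and a family $\mathcal S_n$ of admissible tuples. The family consists of tuples with $\sigma_1\cdots\sigma_n(\sigma_n+1)\le R/T$ for a parameter $T$ to be chosen. Summing over $\mathcal S_n$ then gives
\[
\#\{x\in\Fcal:\ell(x)\le n\}\le N\Bigl(1-\sum_{\mathcal S_n}|I|\Bigr)+\Delta\,\#\mathcal S_n .
\]
Two estimates are needed. First, $\Delta\,\#\mathcal S_n\le N\,\#\mathcal S_n/R$, and $\#\mathcal S_n$ is at most the number of increasing $n$-tuples with product at most $R/T$, which we want to be $\ll R/T$ times a small factor. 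Second, the Lebesgue measure of the set of $x$ whose first $n$ digits have product $d_1\cdots d_n(d_n+1)>R/T$ must be small.

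Both estimates will come from the probabilistic structure of the digits under Lebesgue measure. By Shallit's law of large numbers, $\log(d_1\cdots d_n)\approx n^2/2$; more precisely, $\log d_k$ behaves like a sum of $k$ roughly independent pieces. For the measure bound we will use an exponential moment (Markov/Chernoff) estimate. Writing $d_{k+1}/d_k$ in terms of the Pierce map, one has $\int (d_1\cdots d_n)^{s}\,dx$ explicitly summable over increasing tuples for $s<1$. This gives $\Prob(d_1\cdots d_n(d_n+1)>e^{y})\le C^n e^{-sy}\,\cdot$(combinatorial factor). For the count $\#\mathcal S_n$ we use the same generating series at exponent $1+s$, i.e. a Rankin-type bound
\[
\#\{\sigma: \textstyle\prod\sigma_k\le X\}\le X^{\theta}\sum_{\sigma}\prod\sigma_k^{-\theta}.
\]
Here $\sum_{\sigma_1<\dots<\sigma_n}\prod\sigma_k^{-\theta}$ is bounded through the elementary symmetric function estimate $\le \zeta(\theta)^n/n!$-type bounds, with $\theta$ slightly above $1$.

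Choosing $n=\lfloor\sqrt{2L}-C L^{1/4}\rfloor$ and $\log T\asymp L^{3/4}$ balances the terms. It yields $\Prob_{\Fcal}(\ell\le k)\le$ an explicit small quantity for each $k\le n$. Part (i) then follows from $\frac1N\sum\ell(x)\ge\sum_{k\le n}\Prob_{\Fcal}(\ell(x)>k)$ after crude bookkeeping. The bookkeeping produces the constants $3$ and $11$, and it must handle small $L$, where the bound is trivial because the right side is negative or $\ell\ge1$. Part (ii) takes $n=\lceil c\sqrt{\log R}\rceil$ with $\log T=\delta\log R$. The Gaussian-type margin $(1-c^2/2)\log R$ between $n^2/2$ and $\log R$, fed into the Chernoff bound with optimized $s$, gives the rate $\exp(-\kappa_c\sqrt{\log R})$. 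The cubic denominator in $\kappa_c$ comes from the variance-type term of order $n^3$ in $\log(d_1\cdots d_n)$.

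The main obstacle is the large deviation estimate for $\log(d_1\cdots d_n)$ at scale $\sqrt{\log R}$ deviations. These deviations are of order $n^{3/2}$ around the mean $n^2/2$, and the bound must be uniform and explicit, not just asymptotic. I would first try to obtain it by computing $\E[d_{k+1}^{s}\mid d_1,\dots,d_k]$ exactly via the interval lengths and bounding it by $d_k^{s}\cdot(1-s)^{-1}$-type factors. These conditional bounds then iterate into a clean moment generating function bound.
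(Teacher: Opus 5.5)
Your architecture matches the paper's: discrepancy applied to the open cylinders of words with bounded product, a Rankin count of those words, and an exponential-moment tail bound for $\log P_n$, summed over depths in (i). The step you flag as the main obstacle is wrong as written, however.

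\textbf{The moment bound.} The moment $\int (d_1\cdots d_n)^s\,dx$ is not finite for all $s<1$. Since $P_n\geq d_1^n$ and $\Prob(d_1\geq k)=1/k$, it is infinite as soon as $s\geq 1/n$, so no Chernoff bound with $s$ of order $1$ is available. Your conditional estimate $\E[d_{k+1}^u\mid d_k]\leq (d_k+1)^u/(1-u)$ is fine; compare $1/(e(e+1))=\int_e^{e+1}x^{-2}\,dx$. But it does not iterate with factors $(1-s)^{-1}$. When you integrate out $d_n,d_{n-1},\ldots$ in turn, the exponent on $d_k$ accumulates to $(n-k+1)s$. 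What you actually get is $\E[P_n^{s}]\leq (n!)^{s}\prod_{k=1}^{n}(1-ks)^{-1}$, valid only for $0<s<1/n$. This restriction is precisely what produces the variance $V_n\asymp n^3$ and the extra term $nt$ in the denominator $2(V_n+nt)$, and hence the shape of $\kappa_c$. The paper obtains the same bound from the coupling $D_{k+1}=\lfloor (D_k+1)/U_{k+1}\rfloor$. This gives $\log P_n\leq\sum_{j=1}^{n}(n-j+1)Y_j+\log(n!)$ with i.i.d.\ standard exponentials $Y_j$. The paper then uses Chebyshev for (i) and Markov with $\theta=t/(V_n+nt)<1/n$ for (ii).

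\textbf{Parameter choices.} These also need revision, since the statement has explicit constants.
\begin{enumerate}
\item The number of words is $(R/T)e^{O(\sqrt L)}$, so $\log T$ only has to exceed a constant times $\sqrt L$. The paper takes $\log T=3\sqrt L$, i.e.\ $B=e^{L-3\sqrt L}$, which costs only $3\sqrt2$ in $\sqrt{2\log B}$.
\item Taking $\log T\asymp L^{3/4}$ balances nothing. It lowers the usable depth by a further $\asymp L^{1/4}$. The paper's accounting already spends $2\cdot2^{1/4}/\sqrt3+4/e\approx2.85$ of the available coefficient $3$. So it is not clear that $3$ and $11$ survive, especially for moderate $L$, where a small multiple of $L^{3/4}$ does not yet dominate $2\sqrt L$.
\item In (i), the $L^{1/4}$ loss does not come from making each $\Prob(P_k>B)$ small for $k\leq n$. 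The paper takes $m$ just below $\sqrt{2\log B}-2\log(\sqrt{2\log B}+1)$, where these probabilities are not small. It then sums the Chebyshev tails, $\sum_{j\geq1}\min\{1,m/(3j^2)\}\leq 2\sqrt{m/3}$.
\item In (ii), if your $\delta$ is $1-c^2/2$, then $\log T=\delta\log R$ consumes the entire margin and $t<0$. To keep the stated rate $\kappa_c$ you need $\log T\leq\delta' L$ with $\delta'<\delta/2$, or simply $\log T=3\sqrt L$ as in the paper.
\end{enumerate}
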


For the fractions $a/b$, $1\leq a\leq b$, we may take $\Delta=1$ in~\eqref{eq:discrepancy}, so that
$R=b$ (see Lemma~\ref{lem:grid-discrepancy}). As further applications of
Theorem~\ref{thm:discrepancy}, we consider reduced fractions and numerators in a fixed congruence
class. For each integer $b\geq2$, we define
\begin{equation}\label{eq:primitive-mean}
Q^*(b)\coloneqq\frac1{\varphi(b)}\sum_{\substack{1\leq a\leq b\\ \gcd(a,b)=1}}\ell(a,b),
\qquad R_b\coloneqq\frac{\varphi(b)}{2^{\omega(b)}},
\qquad L_b\coloneqq\logplus R_b,
\end{equation}
where $\varphi$ is Euler's totient function and $\omega(b)$ is the number of distinct prime
factors of $b$. By M\"obius inversion, the reduced fractions with denominator $b$
satisfy~\eqref{eq:discrepancy} with $N=\varphi(b)$ and $\Delta=2^{\omega(b)}$, which leads to the
following corollary.

\begin{corollary}\label{cor:primitive}

For the reduced fractions, the following hold.
\begin{enumerate}
\item For every integer $b\geq2$, we have
\begin{equation}\label{eq:primitive-bound}
Q^*(b)\geq\sqrt{2L_b}-3L_b^{1/4}-11.
\end{equation}
In particular,
\begin{equation}\label{eq:primitive-liminf}
\liminf_{b\to\infty}\frac{Q^*(b)}{\sqrt{\log b}}\geq\sqrt2.
\end{equation}
\item For each $0<c<\sqrt2$, there exist $\gamma_c>0$ and an integer $b_c^*\geq2$ such that
\begin{equation}\label{eq:primitive-short}
\#\{1\leq a\leq b:\gcd(a,b)=1,\ \ell(a,b)<c\sqrt{\log b}\}
\leq3\varphi(b)\exp\bigl(-\gamma_c\sqrt{\log b}\bigr)
\end{equation}
for every integer $b\geq b_c^*$.
\end{enumerate}

\end{corollary}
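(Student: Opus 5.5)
The plan is to apply Theorem~\ref{thm:discrepancy} to the multiset $\Fcal_b\coloneqq\{a/b:1\leq a\leq b,\ \gcd(a,b)=1\}$, which has cardinality $N=\varphi(b)$. Since $\ell(a/b)=\ell(a,b)$ by Lemma~\ref{lem:algorithm}, the average of $\ell$ over $\Fcal_b$ is exactly $Q^*(b)$. The first step is the discrepancy estimate. For $0\leq u<v\leq1$, Möbius inversion gives
\[
\#(\Fcal_b\cap(u,v))=\sum_{d\mid b}\mu(d)\,\#\{m\in\Z:u b/d<m<v b/d\}.
\]
Each inner count equals $(v-u)b/d+\theta_d$ with $|\theta_d|\leq1$. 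Summing the main terms gives $(v-u)b\sum_{d\mid b}\mu(d)/d=(v-u)\varphi(b)$. Only squarefree $d$ contribute to the errors, so the total error is at most $\sum_{d\mid b}|\mu(d)|=2^{\omega(b)}$. Hence~\eqref{eq:discrepancy} holds with $\Delta=2^{\omega(b)}$, and therefore $R=R_b$ and $L=L_b$. Part~(i) of Theorem~\ref{thm:discrepancy} then yields~\eqref{eq:primitive-bound} directly.

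For~\eqref{eq:primitive-liminf}, I need $L_b=\log b-o(\log b)$; the $-3L_b^{1/4}-11$ terms are negligible against $\sqrt{2L_b}$. I would use the standard bounds $\varphi(b)\gg b/\log\log b$ and $\omega(b)\leq(1+o(1))\log b/\log\log b$. Together these give $\log R_b\geq\log b-O(\log b/\log\log b)$, so $\sqrt{2L_b}/\sqrt{\log b}\to\sqrt2$.

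For part~(ii), fix $0<c<\sqrt2$ and pick $c'$ with $c<c'<\sqrt2$, for instance $c'=(c+\sqrt2)/2$. Let $\gamma_c\coloneqq\kappa_{c'}/2$. By the asymptotic just proved, $\sqrt{\log R_b}/\sqrt{\log b}\to1$. So for all large $b$ we have $c\sqrt{\log b}\leq c'\sqrt{\log R_b}$ and $R_b\geq R_{c'}$, and also $\kappa_{c'}\sqrt{\log R_b}\geq\gamma_c\sqrt{\log b}$. Then
\[
\#\{\gcd(a,b)=1:\ell(a,b)<c\sqrt{\log b}\}\leq\#\{x\in\Fcal_b:\ell(x)<c'\sqrt{\log R_b}\},
\]
and by~\eqref{eq:general-short} the right-hand side is at most $3\varphi(b)\exp(-\kappa_{c'}\sqrt{\log R_b})\leq3\varphi(b)\exp(-\gamma_c\sqrt{\log b})$. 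Taking $b_c^*$ large enough for all these conditions to hold gives~\eqref{eq:primitive-short}.

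The main obstacle is making the comparison between $\log R_b$ and $\log b$ uniform in $b$, not merely along typical $b$. This needs the worst-case bounds on $\varphi$ and $\omega$. I would cite the classical results: Landau's bound $\varphi(b)\geq(e^{-\gamma}+o(1))\,b/\log\log b$, and $\omega(b)\leq(1+o(1))\log b/\log\log b$, which follows from the fact that a product of $k$ distinct primes is at least the $k$-th primorial.
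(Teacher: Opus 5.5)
Your proposal is correct and follows the paper's proof essentially step for step. You use the same M\"obius-inversion discrepancy bound with $\Delta=2^{\omega(b)}$, and the same choice $c'=(c+\sqrt2)/2$ and $\gamma_c=\kappa_{c'}/2$ in part (ii). The only difference is that the paper gets $\log R_b/\log b\to1$ from the elementary bounds $2^{\omega(b)}\leq C_\eta b^\eta$ and $b/\varphi(b)\leq C_\eta' b^\eta$ rather than from Landau's theorem and the bound on $\omega(b)$, which are more than you need: since $b/\varphi(b)\leq2^{\omega(b)}$, the estimate $\omega(b)=o(\log b)$ alone suffices.
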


Similarly, the numerators in a fixed congruence class within $1\leq a\leq b$ have discrepancy
at most $2$ after division by $b$. For $b,m\in\N$ and $h\in\Z$, let
\begin{equation}\label{eq:AP-set}
\mathcal A(b;m,h)\coloneqq\{1\leq a\leq b:a\equiv h\pmod m\}.
\end{equation}

\begin{corollary}\label{cor:AP}

Let $b,m\in\N$ and $h\in\Z$, and suppose that $N\coloneqq\#\mathcal A(b;m,h)$ is positive. Define
$L\coloneqq\logplus(N/2)$. Then, the following hold.
\begin{enumerate}
\item We have
\begin{equation}\label{eq:AP-mean}
\frac1N\sum_{a\in\mathcal A(b;m,h)}\ell(a,b)
\geq\sqrt{2L}-3L^{1/4}-11.
\end{equation}
\item For each $0<c<\sqrt2$, if $N/2\geq R_c$, where $R_c$ is as in
Theorem~\ref{thm:discrepancy}\textup{(ii)}, then
\begin{equation}\label{eq:AP-short}
\frac1N\#\{a\in\mathcal A(b;m,h):\ell(a,b)<c\sqrt{\log(N/2)}\}
\leq3\exp\bigl(-\kappa_c\sqrt{\log(N/2)}\bigr).
\end{equation}
\end{enumerate}

\end{corollary}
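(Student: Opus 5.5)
Corollary~\ref{cor:AP} will follow from Theorem~\ref{thm:discrepancy} applied to the multiset $\Fcal\coloneqq\{a/b:a\in\mathcal A(b;m,h)\}$. This is a set of $N$ distinct rationals in $(0,1]$, and $\ell(a/b)=\ell(a,b)$ by Lemma~\ref{lem:algorithm}. It therefore suffices to verify the discrepancy hypothesis~\eqref{eq:discrepancy} with $\Delta=2$. Then $R=N/2$, so $L=\logplus(N/2)$ agrees with the definition in the corollary. Part~(i) is then exactly~\eqref{eq:general-mean}, and part~(ii) is exactly~\eqref{eq:general-short} under the assumption $R=N/2\geq R_c$.

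The verification of~\eqref{eq:discrepancy} is an elementary count. Fix $0\leq u<v\leq1$; then $a/b\in(u,v)$ if and only if $a$ is an integer in the open interval $(bu,bv)$ with $a\equiv h\pmod m$. Here $1\leq a\leq b$ is automatic, because $(bu,bv)\subseteq(0,b)$.

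First I count the integers $a\equiv h\pmod m$ in an interval of length $\lambda$. Such integers are spaced exactly $m$ apart, so their number lies in
\[
\bigl[\lambda/m-1,\ \lambda/m+1\bigr].
\]
Applied to $(bu,bv)$, this gives
\[
\Bigl|\#(\Fcal\cap(u,v))-\frac{b(v-u)}{m}\Bigr|\leq1.
\]

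Next I compare $N$ with $b/m$. Since $\mathcal A(b;m,h)$ consists of the residue-class elements in $(0,b]$, the same spacing argument gives $|N-b/m|\leq1$. Multiplying by $v-u\leq1$,
\[
\Bigl|N(v-u)-\frac{b(v-u)}{m}\Bigr|\leq v-u\leq1.
\]
The triangle inequality now yields $|\#(\Fcal\cap(u,v))-N(v-u)|\leq2$, which is~\eqref{eq:discrepancy} with $\Delta=2$.

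The only step needing a little care is the half-open versus open endpoint bookkeeping in these two counts; neither count is a genuine obstacle. If a sharper constant were wanted, one could use the exact form $N=\lfloor (b-h')/m\rfloor+1$, where $h'$ is the least positive representative of the class. That refinement is unnecessary here, since the crude bound of $1$ for each count already gives $\Delta=2$.
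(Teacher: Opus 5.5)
Your proposal is correct and follows the paper's proof essentially step for step. You bound the count in $(bu,bv)$ to within $1$ of $b(v-u)/m$ and show $|N-b/m|\leq1$ by the spacing argument, which the paper phrases as writing $a=h+mk$ and counting the integers $k$ in an interval of length $b(v-u)/m$; the triangle inequality then gives $\Delta=2$, so $R=N/2$, and both parts follow from Theorem~\ref{thm:discrepancy}.
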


It is worth noting that the bounds in Corollary~\ref{cor:AP} depend only on the number $N$ of
numerators, and so they are uniform in the modulus $m$ and the residue class $h$. In particular, they apply
to any sequence of such progressions whose cardinalities tend to infinity.

We briefly describe the idea of the proof of Theorem~\ref{thm:discrepancy}. In the proof
of~\cite[Theorem~7]{ES91}, Erd\H{o}s and Shallit counted the occurrences of each digit in a
logarithmic range by summing over its possible preceding digits. Instead, we count the initial
blocks of digits whose product does not exceed a parameter $B$ (Lemma~\ref{lem:word-count}), and
we sum the discrepancies over the intervals determined by these blocks (Lemma~\ref{lem:transfer}).
The total length of these intervals is estimated by means of Shallit's description of the digits of
a uniformly distributed point~\cite[Theorems~1 and~2]{Sha86}, which we realize by independent
uniform random variables (see Lemmas~\ref{lem:coupling-bound} and~\ref{lem:deviations}). This
allows us to control an initial block of digits of length close to $\sqrt{2\log R}$. This
threshold arises because the logarithm of the product of the first $n$ digits of such a point is
stochastically dominated by a random variable with mean $n(n+1)/2+\log(n!)$.

This paper is organized as follows. In Section~\ref{sec:preliminaries}, we present some elementary
facts about finite Pierce expansions and the digits of a uniformly distributed point. In
Section~\ref{sec:auxiliary}, we establish some auxiliary results that will be used in the proof of
Theorem~\ref{thm:discrepancy}. In Section~\ref{sec:proofs}, we prove Theorem~\ref{thm:discrepancy},
and then deduce Theorems~\ref{thm:mean-grid} and~\ref{thm:short-grid} and
Corollaries~\ref{cor:primitive} and~\ref{cor:AP} from it. Finally, in Section~\ref{sec:identities},
we record some exact identities for the total lengths.

Throughout the paper, we denote by $\N$ the set of positive integers,
$\N_\infty\coloneqq\N\cup\{\infty\}$ the set of extended positive integers, and $\mu$ the M\"obius
function. For $b\in\Z$ and $r\in\N$, we denote by $b\bmod r$ the least non-negative residue of $b$
modulo $r$, and for $y\in\mathbb R$, we denote by $\lceil y\rceil$ the smallest integer not less
than $y$. For an interval $J\subseteq\mathbb R$, we denote by $|J|$ its length, and for a finite set
or multiset $\mathcal S$, we denote by $\#\mathcal S$ its cardinality. The logarithm $\log$ always
means the natural logarithm. All random variables are defined on a common probability space, and we
denote by $\Prob$, $\E$, and $\Var$ its probability measure, the expectation, and the variance,
respectively.

\section{Preliminaries}\label{sec:preliminaries}

This section is devoted to presenting some elementary facts about finite Pierce expansions and the
Pierce expansion digits of a uniformly distributed point. These facts are well known
(see~\cite{Ahn23,ES91,Sha86}), and we recall them in the form needed in this paper.

To generate the Pierce expansion of $x\in[0,1]$, we begin with two maps $d_1\colon[0,1]\to\N_\infty$
and $T\colon[0,1]\to[0,1]$ given by
\begin{equation}\label{eq:digit-map}
d_1(x)\coloneqq
\begin{cases}
\lfloor1/x\rfloor, & \text{if }x\neq0,\\
\infty, & \text{if }x=0,
\end{cases}
\qquad\text{and}\qquad
T(x)\coloneqq
\begin{cases}
1-d_1(x)x, & \text{if }x\neq0,\\
0, & \text{if }x=0,
\end{cases}
\end{equation}
respectively, where $\lfloor y\rfloor$ denotes the largest integer not exceeding $y\in\mathbb R$.
For each $n\in\N$, we define $d_n(x)\coloneqq d_1(T^{n-1}(x))$. Let $y\in(0,1]$. By definition, we
have $d_1(y)\leq1/y<d_1(y)+1$, and so $1/(d_1(y)+1)<y$. It follows that
\begin{equation}\label{eq:T-bound}
0<T(y)<\frac1{d_1(y)+1}\qquad\text{whenever }T(y)\neq0.
\end{equation}
Applying~\eqref{eq:T-bound} to $y=T^{n-1}(x)$, we see that $d_{n+1}(x)\geq d_n(x)+1$ for each
$x\in(0,1]$ and $n\in\N$ with $d_{n+1}(x)<\infty$. Thus, the finite digits of each $x\in(0,1]$ are
strictly increasing.

For $x\in(0,1]$ and $n\in\N$ with $d_n(x)<\infty$, we define $P_n(x)\coloneqq d_1(x)\cdots d_n(x)$.
Since $y=(1-T(y))/d_1(y)$ for each $y\in(0,1]$ by definition, we find by induction on $n$
that
\begin{equation}\label{eq:series-remainder}
x=\sum_{j=1}^{n}\frac{(-1)^{j+1}}{P_j(x)}
+\frac{(-1)^nT^n(x)}{P_n(x)}
\end{equation}
for each $x\in(0,1]$ and $n\in\N$ with $d_n(x)<\infty$. In particular, if $T^m(x)=0$ for some
$m\in\N$, then $x$ is rational. For a rational number $x\in(0,1]$, let $\ell(x)$ be the least
$m\in\N$ such that $T^m(x)=0$; such an $m$ exists by Lemma~\ref{lem:algorithm} below. Then,
$d_n(x)<\infty$ if and only if $n\leq\ell(x)$, i.e., the digit sequence of $x$ consists of $\ell(x)$
finite digits followed by $\infty$'s. We call $\ell(x)$ the length of the (finite) Pierce expansion
of $x$. On the other hand, every irrational number in $(0,1]$ has infinitely many finite digits.

The following lemma shows that, for rational numbers, the digits are computed by the remainder
iteration~\eqref{eq:intro-remainders}.

\begin{lemma}[{see~\cite[Observation~1]{ES91} and~\cite[Proposition~2.2]{Ahn23}}]\label{lem:algorithm}

Let $a$ and $b$ be integers with $1\leq a\leq b$, let $x\coloneqq a/b$, and let $(r_j)$ be as
in~\eqref{eq:intro-remainders}. Then, the following hold.
\begin{enumerate}
\item For each $0\leq j<\ell(a,b)$, we have
\begin{equation}\label{eq:remainder-map}
T^j(x)=\frac{r_j}{b}
\qquad\text{and}\qquad
d_{j+1}(x)=\left\lfloor\frac b{r_j}\right\rfloor.
\end{equation}
\item $T^{\ell(a,b)}(x)=0$, and so $\ell(x)=\ell(a,b)$.
\item We have
\begin{equation}\label{eq:finite-series}
x=\sum_{j=1}^{\ell(x)}\frac{(-1)^{j+1}}{P_j(x)}.
\end{equation}
\item If $m\coloneqq\ell(x)\geq2$, then $d_m(x)\geq d_{m-1}(x)+2$.
\end{enumerate}

\end{lemma}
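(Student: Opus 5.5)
The plan is to prove (i) by induction on $j$, and then derive (ii)–(iv) from it together with \eqref{eq:series-remainder} and \eqref{eq:T-bound}.

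For (i), the base case $j=0$ is immediate: $T^0(x)=x=a/b=r_0/b$. Suppose now that $T^j(x)=r_j/b$ with $r_j>0$, which holds for $j<\ell(a,b)$ by the definition of $\ell(a,b)$. Then by \eqref{eq:digit-map},
\[
d_{j+1}(x)=d_1\bigl(T^j(x)\bigr)=\left\lfloor\frac b{r_j}\right\rfloor .
\]
Writing $b=q r_j+r_{j+1}$ with $q=\lfloor b/r_j\rfloor$ and $0\le r_{j+1}<r_j$, we get
\[
T^{j+1}(x)=1-q\,\frac{r_j}{b}=\frac{b-qr_j}{b}=\frac{r_{j+1}}{b}.
\]
This holds even when $r_{j+1}=0$, since then $T^{j+1}(x)=0$ is consistent with the formula. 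Hence the relation $T^{j}(x)=r_{j}/b$ persists for all $0\le j\le \ell(a,b)$, and the digit formula holds for $j<\ell(a,b)$.

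For (ii), the identity $T^{j}(x)=r_j/b$ at $j=\ell(a,b)$ gives $T^{\ell(a,b)}(x)=0$. For $j<\ell(a,b)$ we have $r_j>0$, so $T^j(x)\neq 0$. Thus $\ell(a,b)$ is the least $m$ with $T^m(x)=0$, i.e.\ $\ell(x)=\ell(a,b)$. This also justifies the existence claim made before the lemma, because every rational in $(0,1]$ has the form $a/b$ with $1\le a\le b$.

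For (iii), apply \eqref{eq:series-remainder} with $n=\ell(x)$. This is legitimate since $d_{\ell(x)}(x)<\infty$: it equals $\lfloor b/r_{\ell-1}\rfloor$ with $r_{\ell-1}>0$. The remainder term vanishes because $T^{\ell(x)}(x)=0$.

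For (iv), which I expect to be the only step needing a small extra idea, set $y\coloneqq T^{m-2}(x)$ and $z\coloneqq T(y)=T^{m-1}(x)$. Since $T(z)=0$, we have $1=d_1(z)z$, so $z=1/d_m(x)$ exactly. On the other hand, $T(y)\ne0$, so \eqref{eq:T-bound} gives $z<1/(d_{m-1}(x)+1)$. Combining,
\[
\frac1{d_m(x)}<\frac1{d_{m-1}(x)+1},
\]
and hence $d_m(x)>d_{m-1}(x)+1$. Because the digits are integers, this yields $d_m(x)\ge d_{m-1}(x)+2$.
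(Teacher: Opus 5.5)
Your proof is correct, and for (i)--(iii) it is the paper's argument: induction on $j$ using $d_1(r/b)=\lfloor b/r\rfloor$ and $T(r/b)=(b\bmod r)/b$, followed by \eqref{eq:series-remainder} with $n=\ell(x)$. The only difference is in (iv), which the paper does not prove but cites from \cite[Proposition~2.2]{Ahn23}. Your argument there is valid and makes the lemma self-contained: $T^m(x)=0$ with $T^{m-1}(x)\neq0$ forces $T^{m-1}(x)=1/d_m(x)$, while \eqref{eq:T-bound} applied to $T^{m-2}(x)\neq0$ gives $T^{m-1}(x)<1/(d_{m-1}(x)+1)$.
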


\begin{proof}

Parts (i)--(iii) are essentially~\cite[Observation~1]{ES91} and its proof (see
also~\cite[p.~23]{Sha86}). Indeed, for each $1\leq r\leq b$, we have $d_1(r/b)=\lfloor b/r\rfloor$
and $T(r/b)=(b\bmod r)/b$ by~\eqref{eq:digit-map}. Hence, (i) and (ii) follow by induction on $j$,
and (iii) follows by taking $n=\ell(x)$ in~\eqref{eq:series-remainder}. Part (iv)
is~\cite[Proposition~2.2]{Ahn23} (see also~\cite[p.~44]{ES91}).
\end{proof}

Since the finite digits are strictly increasing, we have $P_n(x)\geq n!$ whenever $d_n(x)<\infty$.
Therefore, for irrational $x$, on letting $n\to\infty$ in~\eqref{eq:series-remainder}, we see
that~\eqref{eq:intro-Pierce} holds, since $0\leq T^n(x)\leq1$. For rational $x$, the same expansion
follows from~\eqref{eq:finite-series} and the convention $1/\infty=0$.

Observe that $\ell(x)$ depends only on $x$, and not on the choice of the representation $x=a/b$. We
emphasize that we always use the digits generated by~\eqref{eq:digit-map}; that is, the expansion of
a rational number is never replaced by an alternative expansion of the same number.

We now describe the points whose Pierce expansions begin with a given block of digits. A finite
sequence $\sigma=(\sigma_1,\ldots,\sigma_n)$ of positive integers with $\sigma_1<\cdots<\sigma_n$,
where $n\in\N$, is called an admissible word of length $n$. For such $\sigma$, we define
\begin{equation}\label{eq:word-definitions}
P(\sigma)\coloneqq\prod_{j=1}^{n}\sigma_j
\qquad\text{and}\qquad
S(\sigma)\coloneqq\sum_{j=1}^{n}\frac{(-1)^{j+1}}{\sigma_1\cdots\sigma_j},
\end{equation}
and let $J_\sigma$ be the open interval with endpoints
\begin{equation}\label{eq:cylinder-endpoints}
S(\sigma)\qquad\text{and}\qquad
S(\sigma)+\frac{(-1)^n}{P(\sigma)(\sigma_n+1)}.
\end{equation}
Note that the second endpoint is equal to $S(\sigma_1,\ldots,\sigma_{n-1},\sigma_n+1)$. We also
denote by $I_\sigma\coloneqq\{x\in(0,1]:d_k(x)=\sigma_k\text{ for all }1\leq k\leq n\}$ the
fundamental interval associated with $\sigma$. By~\eqref{eq:series-remainder}, every $x\in I_\sigma$
satisfies $x=S(\sigma)+(-1)^nT^n(x)/P(\sigma)$, and so $T^n(x)=0$ if and only if $x=S(\sigma)$.

\begin{lemma}[{see~\cite[Theorem~1]{Sha86} and~\cite[Proposition~3.5]{Ahn23}}]\label{lem:cylinders}

Let $\sigma$ be an admissible word of length $n$. Then, a point $x\in(0,1]$ belongs to $J_\sigma$ if
and only if $d_k(x)=\sigma_k$ for all $1\leq k\leq n$ and $T^n(x)>0$. Moreover,
\begin{equation}\label{eq:cylinder-length}
|J_\sigma|=\frac1{P(\sigma)(\sigma_n+1)},
\end{equation}
and the intervals $J_\tau$, where $\tau$ runs over the admissible words of length $n$, are pairwise
disjoint, and their lengths sum to $1$.

\end{lemma}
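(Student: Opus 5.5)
We prove Lemma~\ref{lem:cylinders} by induction on $n$, using the first-digit map and the identity $y=(1-T(y))/d_1(y)$.

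For the characterization, the key observation is that, for $y\in(0,1]$ and $k\in\N$,
\[
d_1(y)=k\ \text{and}\ T(y)>0\iff \frac1{k+1}<y<\frac1k .
\]
Indeed, $d_1(y)=k$ means $1/(k+1)<y\leq1/k$, and $T(y)=1-ky$ vanishes exactly when $y=1/k$. Moreover, on this open interval the map $y\mapsto T(y)=1-ky$ is a decreasing affine bijection onto $(0,1/(k+1))$.

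This settles the case $n=1$: the set $J_{(\sigma_1)}$ has endpoints $1/\sigma_1$ and $1/\sigma_1-1/(\sigma_1(\sigma_1+1))=1/(\sigma_1+1)$, as required.

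For the inductive step, write $\sigma=(\sigma_1,\sigma')$ with $\sigma'=(\sigma_2,\ldots,\sigma_n)$, which is admissible of length $n-1$. A point $x$ satisfies the conditions for $\sigma$ if and only if two things hold:
\begin{itemize}
\item[(a)] $d_1(x)=\sigma_1$ and $T(x)>0$;
\item[(b)] $T(x)$ satisfies the conditions for $\sigma'$, that is, $T(x)\in J_{\sigma'}$ by the induction hypothesis.
\end{itemize}
We must check that (a) is automatic once $T^n(x)>0$. If $T(x)=0$, then all later iterates vanish and $d_2(x)=\infty$. Hence $T^n(x)>0$ forces $T(x)>0$ when $n\geq2$.

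Since $\sigma_2\geq\sigma_1+1$, we have $J_{\sigma'}\subseteq(1/(\sigma_2+1),1/\sigma_2)\subseteq(0,1/(\sigma_1+1))$. So $J_{\sigma'}$ lies in the range of the affine bijection above, and the set of $x$ satisfying the conditions is the preimage
\[
\bigl\{(1-t)/\sigma_1 : t\in J_{\sigma'}\bigr\}.
\]
This is an open interval. Substituting $t=S(\sigma')$ and $t=S(\sigma')+(-1)^{n-1}/(P(\sigma')(\sigma_n+1))$ into $(1-t)/\sigma_1$ gives exactly $S(\sigma)$ and $S(\sigma)+(-1)^n/(P(\sigma)(\sigma_n+1))$. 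This follows from the definitions in \eqref{eq:word-definitions} and the relation $S(\sigma)=(1-S(\sigma'))/\sigma_1$. Hence the set equals $J_\sigma$.

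The length formula \eqref{eq:cylinder-length} is then immediate from the endpoints.

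Disjointness is also immediate. A point of $J_\sigma\cap J_\tau$ would have $d_k(x)=\sigma_k=\tau_k$ for all $k\leq n$, forcing $\sigma=\tau$.

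For the total length, note that $J_\tau$ and $I_\tau$ differ by at most the single point $S(\tau)$, together with points $x$ where $T^j(x)=0$ for some $j<n$. Thus the union of the $J_\tau$ over admissible $\tau$ of length $n$ is $(0,1]$ minus the set of $x$ with $T^m(x)=0$ for some $m\leq n$. Every such point is of the form $S(\rho)$ for some admissible word $\rho$, by \eqref{eq:series-remainder}, so this exceptional set is countable. Countable additivity of Lebesgue measure then gives $\sum_\tau|J_\tau|=1$.

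The main obstacle is this last step. One must make sure the exceptional set is genuinely null, and not rely on a direct telescoping computation of $\sum_\tau 1/(P(\tau)(\tau_n+1))$. The countability argument above handles it cleanly.
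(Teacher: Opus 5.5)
Your proof is correct, but it takes a more self-contained route than the paper. The paper does not re-derive the cylinder structure. It quotes \cite[Theorem~1]{Sha86} and \cite[Proposition~3.5]{Ahn23} for the fact that $I_\sigma$ is an interval with endpoints \eqref{eq:cylinder-endpoints} that excludes the second endpoint. It then notes from \eqref{eq:series-remainder} that $T^n$ can vanish on $I_\sigma$ only at $S(\sigma)$, which gives $J_\sigma=I_\sigma\setminus\{S(\sigma)\}$. Disjointness and the total length then go as in your last two steps: the paper's ``every irrational in $(0,1)$ lies in some $J_\tau$, and the rationals are null'' is your ``the complement of the union is a countable set of points $S(\rho)$''. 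You instead construct $J_\sigma$ as the image of $J_{\sigma'}$ under the inverse branch $t\mapsto(1-t)/\sigma_1$, with $S(\sigma)=(1-S(\sigma'))/\sigma_1$ supplying the endpoints. This costs a few lines, but it proves rather than imports the characterization, together with the inclusion $J_\sigma\subseteq(0,1)$ that is used later in Lemma~\ref{lem:transfer}.

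A few small points of presentation. First, state the induction hypothesis as the set equality $J_\sigma=\{x\in(0,1]:d_k(x)=\sigma_k\ (1\leq k\leq n),\ T^n(x)>0\}$. This is what your base case and inductive step actually establish, and it is what justifies $J_{\sigma'}\subseteq(1/(\sigma_2+1),1/\sigma_2)$; the pointwise ``if and only if'' for $x\in(0,1]$ alone would not rule out points of $J_{\sigma'}$ outside $(0,1]$. Second, points with $T^j(x)=0$ for some $j<n$ never lie in $I_\tau$, so in fact $J_\tau=I_\tau\setminus\{S(\tau)\}$ exactly. Your identification of the union is nonetheless right, since finite digits are strictly increasing and so $(d_1(x),\ldots,d_n(x))$ is admissible. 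Third, the telescoping computation you steer away from is harmless. Summing $1/(P(\tau)(\tau_n+1))$ over $\tau_n\geq\tau_{n-1}+1$, with the other entries fixed, gives the length of the interval for $(\tau_1,\ldots,\tau_{n-1})$. Hence the total is independent of $n$ and equals $\sum_{k\geq1}1/(k(k+1))=1$.
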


\begin{proof}

By~\cite[Theorem~1]{Sha86} and~\cite[Proposition~3.5]{Ahn23}, $I_\sigma$ is an interval with
endpoints~\eqref{eq:cylinder-endpoints}, and it does not contain the second endpoint. Since
$T^n(x)=0$ for $x\in I_\sigma$ if and only if $x=S(\sigma)$, the set of $x\in I_\sigma$ with
$T^n(x)>0$ is equal to $I_\sigma\setminus\{S(\sigma)\}=J_\sigma$. This proves the first assertion,
and~\eqref{eq:cylinder-length} is immediate from~\eqref{eq:cylinder-endpoints}. For the last
assertion, note that the digits of a point are uniquely determined by definition, and so the
intervals $J_\tau$, where $\tau$ runs over the admissible words of length $n$, are pairwise
disjoint. Moreover, by the first assertion, every irrational number in $(0,1)$ belongs to one of
them. As the rational numbers form a set of Lebesgue measure zero, the lengths of these intervals
sum to $1$.
\end{proof}

By Lemma~\ref{lem:cylinders}, $J_\sigma$ is the interior of $I_\sigma$. We work with the open
intervals $J_\sigma$ in order to make the conventions at rational endpoints explicit.

\begin{remark}\label{rem:endpoints}

Let $x\in(0,1]$ be rational, and let $n\in\N$. By Lemma~\ref{lem:cylinders}, if $n<\ell(x)$, then
$x$ belongs to exactly one of the intervals $J_\sigma$, where $\sigma$ runs over the admissible
words of length $n$, namely the one with $\sigma=(d_1(x),\ldots,d_n(x))$; if $n\geq\ell(x)$, then
$x$ belongs to none of them. Consequently, if $x\in J_\sigma$ for some admissible word $\sigma$ of
length $n$, then $\ell(x)>n$. These facts will be used in the proof of Lemma~\ref{lem:transfer}.

\end{remark}

Let $X$ be a random variable uniformly distributed on $(0,1)$. As $X$ is irrational almost surely,
the digits $d_n(X)$ and the products $P_n(X)$ are finite for all $n\in\N$ almost surely. We write
$d_n$ and $P_n$ for $d_n(X)$ and $P_n(X)$, respectively, whenever no confusion can arise. The
following proposition, due to Shallit, describes the digits $d_n$ as a Markov chain.

\begin{proposition}[{\cite[Theorems~1 and~2]{Sha86}}]\label{prop:digit-law}

We have
\begin{equation}\label{eq:first-digit-law}
\Prob(d_1\geq k)=\frac1k\qquad(k\in\N).
\end{equation}
Furthermore, for each $n\in\N$, we have
\begin{equation}\label{eq:transition-law}
\Prob(d_{n+1}=e\mid d_1,\ldots,d_n)
=\frac{d_n+1}{e(e+1)}\qquad(e\geq d_n+1),
\end{equation}
and, consequently,
\begin{equation}\label{eq:transition-tail}
\Prob(d_{n+1}\geq k\mid d_1,\ldots,d_n)
=\frac{d_n+1}{k}\qquad(k\geq d_n+1).
\end{equation}

\end{proposition}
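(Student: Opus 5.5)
The statement to prove is Proposition~\ref{prop:digit-law}, Shallit's description of the digit process. I would derive it directly from Lemma~\ref{lem:cylinders}, which gives the fundamental intervals and their lengths explicitly. Since $X$ is uniform on $(0,1)$, the probability that the first $n$ digits equal a given admissible word $\sigma$ is $|J_\sigma|=1/(P(\sigma)(\sigma_n+1))$. The interval $I_\sigma$ differs from $J_\sigma$ by a single point, which has probability zero.

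For \eqref{eq:first-digit-law}, apply this with $n=1$: $\Prob(d_1=j)=1/(j(j+1))$. Summing the telescoping series $\sum_{j\geq k}\bigl(1/j-1/(j+1)\bigr)$ gives $1/k$. (Alternatively, $d_1(X)\geq k$ if and only if $X\leq 1/k$.)

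For \eqref{eq:transition-law}, note that the conditioning is on the discrete vector $(d_1,\dots,d_n)$. It therefore suffices to compute, for each admissible $\sigma$ of length $n$ and each $e\geq\sigma_n+1$, the ratio
\[
\frac{\Prob(d_1=\sigma_1,\dots,d_n=\sigma_n,\ d_{n+1}=e)}{\Prob(d_1=\sigma_1,\dots,d_n=\sigma_n)}
=\frac{|J_{(\sigma,e)}|}{|J_\sigma|}
=\frac{1/(P(\sigma)\,e\,(e+1))}{1/(P(\sigma)(\sigma_n+1))}
=\frac{\sigma_n+1}{e(e+1)}.
\]
Here $(\sigma,e)$ is admissible precisely because $e>\sigma_n$. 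For $e\leq\sigma_n$ the conditional probability is zero, since digits strictly increase almost surely, as shown after \eqref{eq:T-bound}.

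Finally, \eqref{eq:transition-tail} follows by summing over $e\geq k$. The telescoping identity $\sum_{e\geq k}1/(e(e+1))=1/k$ gives $(d_n+1)/k$.

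The main obstacle is only bookkeeping. One must check that conditioning on events of positive probability is legitimate: every admissible $\sigma$ has $|J_\sigma|>0$, and the union of these events has full measure by Lemma~\ref{lem:cylinders}. One must also check that the endpoint conventions in Lemma~\ref{lem:cylinders} cost nothing, because each differs from the cylinder event by a single point, a null set.
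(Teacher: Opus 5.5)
Your proposal is correct and follows essentially the same route as the paper. The paper also obtains \eqref{eq:first-digit-law} from $d_1\geq k\iff X\leq 1/k$, computes \eqref{eq:transition-law} as the ratio of the cylinder lengths $|J_{(\sigma_1,\ldots,\sigma_n,e)}|$ and $|J_\sigma|$ from Lemma~\ref{lem:cylinders} (noting that the digit events differ from $\{X\in J_\sigma\}$ only by null sets), and sums over $e\geq k$ to get \eqref{eq:transition-tail}.
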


Since the digits take values in the countable set $\N_\infty$, the conditional probabilities
in~\eqref{eq:transition-law} and~\eqref{eq:transition-tail} are understood in the elementary sense.
For instance, \eqref{eq:transition-law} means that, for every admissible word $\sigma$ of length $n$
and every integer $e\geq\sigma_n+1$, the conditional probability that $d_{n+1}=e$, given
$d_k=\sigma_k$ for all $1\leq k\leq n$, is equal to $(\sigma_n+1)/(e(e+1))$. The same convention
applies to the conditional probabilities in the proof of Lemma~\ref{lem:coupling-bound}.

\begin{proof}

By definition, $d_1\geq k$ if and only if $X\leq1/k$, and so~\eqref{eq:first-digit-law} holds. Now,
let $\sigma$ be an admissible word of length $n$, and let $e\geq\sigma_n+1$ be an integer. By
Lemma~\ref{lem:cylinders}, the event $\{d_k=\sigma_k\text{ for all }1\leq k\leq n\}$ differs from
$\{X\in J_\sigma\}$ by a null set, and the same holds for the word $(\sigma_1,\ldots,\sigma_n,e)$.
Hence, by~\eqref{eq:cylinder-length}, the conditional probability that $d_{n+1}=e$, given
$d_k=\sigma_k$ for all $1\leq k\leq n$, is
\[
\frac{1/(P(\sigma)e(e+1))}{1/(P(\sigma)(\sigma_n+1))}
=\frac{\sigma_n+1}{e(e+1)}.
\]
This proves~\eqref{eq:transition-law}, and~\eqref{eq:transition-tail} follows by summing
\eqref{eq:transition-law} over $e\geq k$.
\end{proof}

\section{Auxiliary results}\label{sec:auxiliary}

In this section, we establish some auxiliary results that will be used in the proofs of the main
results.

We first count admissible words with bounded product. For each real number $B\geq1$, we define
\begin{equation}\label{eq:words-B}
\Wcal(B)\coloneqq\{\sigma\text{ admissible}:P(\sigma)\leq B\}
\qquad\text{and}\qquad H(B)\coloneqq\#\Wcal(B),
\end{equation}
where admissible words of all lengths are allowed. Since $\sigma_j\geq j$ for each $j$, we have
$P(\sigma)\geq n!$ for every admissible word $\sigma$ of length $n$. In addition, every entry of a
word in $\Wcal(B)$ is at most $B$. Consequently, $\Wcal(B)$ is finite. We emphasize that the words in
$\Wcal(B)$ serve as initial blocks of digits, and they need not be the complete digit sequences of
rational numbers. For instance, $\Wcal(B)$ may contain words whose last two entries are consecutive
integers, while the last two finite digits of a rational number never are (see
Lemma~\ref{lem:algorithm}(iv)).

\begin{lemma}\label{lem:word-count}

For every real number $B\geq1$, we have
\begin{equation}\label{eq:word-count}
H(B)\leq2B\exp\bigl(2\sqrt{\log B}\bigr).
\end{equation}

\end{lemma}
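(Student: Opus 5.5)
Admissible words are exactly finite sets $\{\sigma_1<\cdots<\sigma_n\}$ of positive integers, so $H(B)$ counts nonempty finite sets of positive integers whose product is at most $B$. I will bound this count by a Rankin-type (Dirichlet series) argument. For any $s>1$,
\[
H(B)\leq\sum_{\sigma}\Bigl(\frac{B}{P(\sigma)}\Bigr)^{s}
\leq B^{s}\prod_{k=1}^{\infty}\bigl(1+k^{-s}\bigr),
\]
where the product runs over all positive integers $k$. The product expands as the sum of $P(\sigma)^{-s}$ over all finite sets, the empty set included, so the inequality only loses a harmless $+1$. Then
\[
\log H(B)\leq s\log B+\sum_{k\geq1}\log\bigl(1+k^{-s}\bigr)
\leq s\log B+\log2+\sum_{k\geq2}k^{-s}.
\]
Here I separate the $k=1$ factor, which equals exactly $2$, and use $\log(1+y)\leq y$ for the remaining terms.

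Next I bound the tail by an integral comparison: $\sum_{k\geq2}k^{-s}\leq\int_1^\infty t^{-s}\,dt=1/(s-1)$. Writing $s=1+\delta$ with $\delta>0$, this gives
\[
\log H(B)\leq\log B+\log2+\delta\log B+\frac1\delta .
\]
When $B>1$, I choose $\delta=1/\sqrt{\log B}$. Then $\delta\log B+1/\delta=2\sqrt{\log B}$, which yields exactly $H(B)\leq 2B\exp(2\sqrt{\log B})$.

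The degenerate case $B=1$ is handled directly. Here $\Wcal(1)=\{(1)\}$, so $H(1)=1\leq2$, which agrees with the right-hand side at $B=1$. More generally, for $B$ near $1$ any $\delta$ works after taking limits; a direct check at $B=1$ suffices.

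The only delicate point is keeping the constants sharp enough to get exactly $2$ and $2\sqrt{\log B}$. Two choices achieve this: isolating the $k=1$ factor, which contributes the factor $2$ exactly, and using the integral bound starting at $t=1$ for $k\geq2$. Both steps are elementary, so I expect no real obstacle.
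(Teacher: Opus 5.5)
Your proposal is correct and follows the paper's proof essentially line by line: Rankin's method with $s>1$, the product $\prod_{j\geq1}(1+j^{-s})$ with the $j=1$ factor isolated to produce the constant $2$, the bound $\sum_{j\geq2}j^{-s}\leq1/(s-1)$, the choice $s=1+1/\sqrt{\log B}$, and a direct check at $B=1$. Your aside about taking limits for $B$ near $1$ is unnecessary, since $\delta=1/\sqrt{\log B}$ is a valid choice for every $B>1$.
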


\begin{proof}

If $B=1$, then $\Wcal(B)$ consists of the single word $(1)$, and~\eqref{eq:word-count} is clear.
Assume that $B>1$, and let $s>1$. We use Rankin's method~\cite{Ran38}, which is based on the simple
observation that $(B/P(\sigma))^s\geq1$ for every $\sigma\in\Wcal(B)$. It follows that
\[
H(B)\leq\sum_{\sigma\in\Wcal(B)}\Bigl(\frac B{P(\sigma)}\Bigr)^s\leq B^s\sum_{\sigma}P(\sigma)^{-s},
\]
where the last sum runs over all admissible words. Observe that an admissible word is determined by
the set of its entries, and that every non-empty finite subset of $\N$ arises in this way. Hence, on
expanding the product, we obtain
\[
\sum_{\sigma}P(\sigma)^{-s}=\prod_{j=1}^{\infty}(1+j^{-s})-1,
\]
where the product converges since $s>1$. Therefore, we have
\begin{align}\label{eq:rankin}
H(B)
&\leq B^s\prod_{j=1}^{\infty}(1+j^{-s})
=2B^s\prod_{j=2}^{\infty}(1+j^{-s})\nonumber\\
&\leq2\exp\Bigl(s\log B+\sum_{j=2}^{\infty}j^{-s}\Bigr)
\leq2\exp\Bigl(s\log B+\frac1{s-1}\Bigr),
\end{align}
where we used $1+u\leq e^u$ and
\[
\sum_{j=2}^{\infty}j^{-s}\leq\int_1^{\infty}u^{-s}\,du=\frac1{s-1}.
\]
On taking $s\coloneqq1+1/\sqrt{\log B}$ in~\eqref{eq:rankin}, we see that~\eqref{eq:word-count}
holds.
\end{proof}

Let $X$ be as in Section~\ref{sec:preliminaries}. For each real number $B\geq1$, we define
\begin{equation}\label{eq:KB}
K_B\coloneqq\#\{n\in\N:P_n(X)\leq B\}.
\end{equation}
Since $P_n(X)\geq n!$ for all $n\in\N$ almost surely, $K_B$ is bounded for each fixed $B$. The
random variable $X$ is auxiliary, and it is used only to compute the total length of the intervals
$J_\sigma$ in~\eqref{eq:KB-sum} below. In other words, we do not assume that
the digits of rational numbers are independent or follow the distribution of the digits of $X$.

The following lemma relates the lengths of the elements of a finite multiset of rational numbers to
the distribution of the products $P_n(X)$.

\begin{lemma}\label{lem:transfer}

Let $\Fcal$, $N$, $\Delta$, and $R$ be as in Theorem~\ref{thm:discrepancy}, and let $B\geq1$ be a
real number. Then, the following hold.
\begin{enumerate}
\item We have
\begin{equation}\label{eq:transfer-mean}
\frac1N\sum_{x\in\Fcal}\ell(x)
\geq1+\E K_B-\frac{H(B)}R.
\end{equation}
\item For every $n\in\N$, we have
\begin{equation}\label{eq:transfer-short}
\frac1N\#\{x\in\Fcal:\ell(x)<n\}
\leq\Prob(P_n>B)+\frac{H(B)}R.
\end{equation}
\end{enumerate}

\end{lemma}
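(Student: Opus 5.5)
The plan is to write the length as a sum of indicators over admissible words. Remark~\ref{rem:endpoints} then compares each count with an interval length, Lemma~\ref{lem:cylinders} supplies those lengths, and the discrepancy hypothesis~\eqref{eq:discrepancy} controls the error on each interval.

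For a rational $x\in(0,1]$ and $n\in\N$, Remark~\ref{rem:endpoints} says that $x$ lies in some $J_\sigma$ with $\sigma$ of length $n$ if and only if $n<\ell(x)$, and then $\sigma=(d_1(x),\ldots,d_n(x))$. Hence
\[
\ell(x)=1+\sum_{n\geq1}\mathbf 1[n<\ell(x)]\geq1+\sum_{\sigma\in\Wcal(B)}\mathbf 1[x\in J_\sigma].
\]
Only the words $\sigma$ with $P(\sigma)\leq B$ are kept, which is a sub-sum. Summing over $\Fcal$ gives
\[
\sum_{x\in\Fcal}\ell(x)\geq N+\sum_{\sigma\in\Wcal(B)}\#(\Fcal\cap J_\sigma).
\]
Since each $J_\sigma$ is an open subinterval of $(0,1)$, \eqref{eq:discrepancy} gives $\#(\Fcal\cap J_\sigma)\geq N|J_\sigma|-\Delta$. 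Therefore
\[
\frac1N\sum_{x\in\Fcal}\ell(x)\geq1+\sum_{\sigma\in\Wcal(B)}|J_\sigma|-\frac{H(B)}{R}.
\]
It remains to identify $\sum_{\sigma\in\Wcal(B)}|J_\sigma|$ with $\E K_B$. For each $n$, Lemma~\ref{lem:cylinders} and the fact that $X$ is almost surely irrational give $\Prob(X\in J_\sigma)=|J_\sigma|$. The event $\{X\in J_\sigma\}$ agrees almost surely with $\{(d_1,\ldots,d_n)=\sigma\}$, so
\[
\sum_{\sigma\in\Wcal(B),\,|\sigma|=n}|J_\sigma|=\Prob(P_n\leq B).
\]
Summing over $n$ gives $\sum_{\sigma\in\Wcal(B)}|J_\sigma|=\sum_n\Prob(P_n\leq B)=\E K_B$ by Tonelli. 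This proves~(i).

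For (ii), fix $n$. I would not attempt a union over words of all lengths. Instead, consider only the words $\sigma\in\Wcal(B)$ of length $n-1$ when $n\geq2$; for $n=1$ the left side is $0$ and there is nothing to prove. If $x\in J_\sigma$ for such a $\sigma$, then $\ell(x)>n-1$, that is, $\ell(x)\geq n$. Therefore
\[
\#\{x:\ell(x)<n\}\leq N-\sum_{|\sigma|=n-1,\,P(\sigma)\leq B}\#(\Fcal\cap J_\sigma)\leq N-N\,\Prob(P_{n-1}\leq B)+\Delta\,H(B).
\]
Dividing by $N$ gives $\Prob(P_{n-1}>B)+H(B)/R$. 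Since $P_{n-1}\leq P_n$, we have $\Prob(P_{n-1}>B)\leq\Prob(P_n>B)$, which yields~\eqref{eq:transfer-short}. In fact this gives the slightly stronger bound with $n-1$ in place of $n$.

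The main subtlety is the endpoint bookkeeping. The open intervals $J_\sigma$ exclude exactly the rationals whose expansion terminates at step $n$, and this is what makes the counting identity exact rather than off by one. The probabilistic identification requires handling null sets, which Lemma~\ref{lem:cylinders} already does.
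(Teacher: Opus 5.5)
Your proposal is correct and follows the paper's proof. Part (i) is the same discrepancy count over the intervals $J_\sigma$, $\sigma\in\Wcal(B)$, combined with the identification $\sum_{\sigma\in\Wcal(B)}|J_\sigma|=\E K_B$. In (ii) you use words of length $n-1$ and then the monotonicity $P_{n-1}\leq P_n$, whereas the paper uses words of length $n$ directly (which already gives $\ell(x)>n$); this is only an index shift of the same argument.
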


\begin{proof}

(i) Let $\sigma\in\Wcal(B)$. As $J_\sigma$ is an open subinterval of $(0,1)$, it follows
from~\eqref{eq:discrepancy} that
\[
\#(\Fcal\cap J_\sigma)\geq N|J_\sigma|-\Delta.
\]
By Remark~\ref{rem:endpoints}, for each $n\in\N$, every $x\in\Fcal$ lies in at most one interval
$J_\sigma$ such that $\sigma\in\Wcal(B)$ has length $n$, and in no such interval if $n\geq\ell(x)$.
Thus, each $x\in\Fcal$ lies in at most $\ell(x)-1$ of the intervals $J_\sigma$, $\sigma\in\Wcal(B)$.
Summing over $\sigma\in\Wcal(B)$, we find that
\[
\sum_{x\in\Fcal}\bigl(\ell(x)-1\bigr)
\geq\sum_{\sigma\in\Wcal(B)}\#(\Fcal\cap J_\sigma)
\geq N\sum_{\sigma\in\Wcal(B)}|J_\sigma|-\Delta H(B).
\]
On the other hand, by Lemma~\ref{lem:cylinders} and the definition~\eqref{eq:KB} of $K_B$, we have
\begin{equation}\label{eq:KB-sum}
\sum_{\sigma\in\Wcal(B)}|J_\sigma|
=\sum_{n\in\N}\Prob(P_n\leq B)=\E K_B.
\end{equation}
Since $\sum_{x\in\Fcal}(\ell(x)-1)=\sum_{x\in\Fcal}\ell(x)-N$ and $\Delta/N=1/R$, the
inequality~\eqref{eq:transfer-mean} follows.

(ii) Let $n\in\N$, and consider the words in $\Wcal(B)$ of length $n$. By Lemma~\ref{lem:cylinders},
the corresponding intervals $J_\sigma$ are pairwise disjoint, and their union has measure
$\Prob(P_n\leq B)$. Furthermore, by Remark~\ref{rem:endpoints}, every rational number in their union
has length greater than $n$, and there are at most $H(B)$ such intervals. Summing the discrepancy
estimates over them, we have
\[
\#\{x\in\Fcal:\ell(x)\geq n\}
\geq N\Prob(P_n\leq B)-\Delta H(B),
\]
which is equivalent to~\eqref{eq:transfer-short}. This completes the proof of the lemma.
\end{proof}

In order to estimate $\E K_B$ and $\Prob(P_n>B)$, we need tail bounds for the products $P_n$ with
explicit constants. Shallit obtained asymptotic formulas for the mean and the variance of
$\log d_n$~\cite[Theorems~9 and~10]{Sha86}, and described the set of digits of $X$ by means of
independent indicator random variables~\cite[Section~5]{Sha86}. For our purposes, it is more
convenient to realize the digits of $X$ by means of independent uniform random variables. Let
$U_1,U_2,\ldots$ be independent random variables uniformly distributed on $(0,1)$, and define
\begin{equation}\label{eq:uniform-coupling}
D_1\coloneqq\lfloor U_1^{-1}\rfloor
\qquad\text{and}\qquad
D_{n+1}\coloneqq\left\lfloor\frac{D_n+1}{U_{n+1}}\right\rfloor\quad(n\in\N).
\end{equation}
For each $n\in\N$, let $Y_n\coloneqq-\log U_n$. Then, $Y_1,Y_2,\ldots$ are independent and
exponentially distributed with mean $1$ and variance $1$.

\begin{lemma}\label{lem:coupling-bound}

The sequence $(D_n)_{n\in\N}$ has the same distribution as $(d_n(X))_{n\in\N}$. Moreover, for each
$n\in\N$, we have
\begin{equation}\label{eq:digit-coupling-bound}
\log D_n\leq\sum_{i=1}^{n}Y_i+\log n
\end{equation}
and
\begin{equation}\label{eq:product-coupling-bound}
\log(D_1\cdots D_n)\leq W_n+\log(n!),
\qquad\text{where }W_n\coloneqq\sum_{j=1}^{n}(n-j+1)Y_j.
\end{equation}

\end{lemma}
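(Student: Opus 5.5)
The plan is to check the distributional identity through the Markov description in Proposition~\ref{prop:digit-law}, and then to prove the two inequalities by induction from the recursion~\eqref{eq:uniform-coupling}.

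\emph{Distribution.} Since $D_1=\lfloor U_1^{-1}\rfloor$, we have $D_1\geq k$ if and only if $U_1\leq 1/k$. Hence $\Prob(D_1\geq k)=1/k$ for $k\in\N$, which matches~\eqref{eq:first-digit-law}.

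Next, $U_{n+1}$ is independent of $(U_1,\ldots,U_n)$, and hence of $(D_1,\ldots,D_n)$. Fix values $D_1=\sigma_1,\ldots,D_n=\sigma_n$, an event of positive probability. For $k\geq\sigma_n+1$, the event $D_{n+1}\geq k$ is equivalent to $(\sigma_n+1)/U_{n+1}\geq k$, that is, $U_{n+1}\leq(\sigma_n+1)/k$. This has conditional probability $(\sigma_n+1)/k$ because $(\sigma_n+1)/k\leq1$. Taking $k=\sigma_n+1$ gives probability $1$, so $D_{n+1}\geq D_n+1$ almost surely.

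Thus the conditional tails agree with~\eqref{eq:transition-tail}, and so the transition probabilities agree with~\eqref{eq:transition-law}. Both sequences are Markov chains with the same initial law and the same transition kernel. Therefore all finite-dimensional distributions coincide, by the chain rule for elementary conditional probabilities and induction on $n$. This gives equality in distribution as sequences.

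\emph{Digit bound.} I would prove the stronger statement $D_n+1\leq(n+1)e^{Y_1+\cdots+Y_n}$ by induction; I expect this to be the only delicate point. Note that $e^{Y_i}=U_i^{-1}$. For $n=1$, we have $D_1+1\leq U_1^{-1}+1\leq 2U_1^{-1}$, since $U_1^{-1}>1$.

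For the inductive step, suppose $D_n+1\leq (n+1)\prod_{i\leq n}U_i^{-1}$. Then
\[
D_{n+1}+1\leq\frac{D_n+1}{U_{n+1}}+1\leq\frac{(n+1)}{U_1\cdots U_{n+1}}+1\leq\frac{n+2}{U_1\cdots U_{n+1}},
\]
where the last step uses $1\leq 1/(U_1\cdots U_{n+1})$. Hence $D_n< D_n+1\leq(n+1)e^{\sum_{i\le n}Y_i}$. This only gives $\log(n+1)$ rather than $\log n$, so I need a sharper base case.

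To get $\log n$, I would instead prove $D_n\leq n\,e^{Y_1+\cdots+Y_n}$ in the form $D_n+1\leq (n+1)\prod U_i^{-1}$ but with a better comparison. Directly, $D_{n+1}\leq (D_n+1)/U_{n+1}$. Combined with the hypothesis $D_n\leq nE_n$, where $E_n\coloneqq\prod_{i\le n}U_i^{-1}\geq1$, this gives $D_n+1\leq nE_n+1\leq(n+1)E_n$. Hence
\[
D_{n+1}\leq(n+1)E_nU_{n+1}^{-1}=(n+1)E_{n+1}.
\]
The base case is $D_1\leq U_1^{-1}=1\cdot E_1$. This closes the induction and yields~\eqref{eq:digit-coupling-bound} after taking logarithms.

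\emph{Product bound.} Summing~\eqref{eq:digit-coupling-bound} over $m=1,\ldots,n$ gives
\[
\log(D_1\cdots D_n)\leq\sum_{m=1}^{n}\sum_{i=1}^{m}Y_i+\sum_{m=1}^n\log m.
\]
Interchanging the order of summation, each $Y_j$ appears for $m=j,\ldots,n$, that is, $n-j+1$ times. The double sum is therefore $W_n$, and the second sum is $\log(n!)$. This is exactly~\eqref{eq:product-coupling-bound}.
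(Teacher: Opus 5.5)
Your proof is correct and follows the paper's route. You identify the Markov chain through the same conditional tails $\Prob(D_{n+1}\geq k\mid D_1,\ldots,D_n)=(D_n+1)/k$, prove the digit bound by induction on the recursion~\eqref{eq:uniform-coupling}, and obtain the product bound by the same interchange of summation. The only real difference is how the $+1$ is absorbed: you use $E_n\geq1$ in $nE_n+1\leq(n+1)E_n$, whereas the paper uses $D_{n-1}\geq n-1$ to get $\log(1+1/D_{n-1})\leq\log\frac{n}{n-1}$.

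In a write-up, drop the abandoned first attempt. Also drop the sentence claiming to prove $D_n\leq nE_n$ ``in the form'' $D_n+1\leq(n+1)E_n$, since the latter does not imply the former. Your actual induction on $D_n\leq nE_n$ is complete as written.
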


\begin{proof}

Clearly, $\Prob(D_1\geq k)=\Prob(U_1\leq1/k)=1/k$ for each $k\in\N$. Let $n\in\N$. As $U_{n+1}$ is
independent of $D_1,\ldots,D_n$, we have, for each integer $k\geq D_n+1$,
\[
\Prob(D_{n+1}\geq k\mid D_1,\ldots,D_n)
=\Prob\Bigl(U_{n+1}\leq\frac{D_n+1}k\Bigm|D_1,\ldots,D_n\Bigr)
=\frac{D_n+1}k.
\]
Since $D_{n+1}\geq D_n+1$ by definition, this determines the conditional distribution of $D_{n+1}$.
Hence, in view of Proposition~\ref{prop:digit-law}, the sequences $(D_n)$ and $(d_n(X))$ are Markov
chains with the same initial distribution and the same transition probabilities, which proves the
first assertion.

For~\eqref{eq:digit-coupling-bound}, note first that $D_1\leq U_1^{-1}$, which gives the case $n=1$.
Since $D_1\geq1$ and $D_{k+1}\geq D_k+1$ for each $k\in\N$, we have $D_k\geq k$ for each $k\in\N$.
Thus, for each $n\geq2$,
\begin{align*}
\log D_n
&\leq\log(D_{n-1}+1)+Y_n
=\log D_{n-1}+\log\Bigl(1+\frac1{D_{n-1}}\Bigr)+Y_n\\
&\leq\log D_{n-1}+Y_n+\log\frac n{n-1}.
\end{align*}
Iterating this inequality, we find that~\eqref{eq:digit-coupling-bound} holds. Finally, summing
\eqref{eq:digit-coupling-bound}, with $n$ replaced by $k$, over $1\leq k\leq n$, we
deduce~\eqref{eq:product-coupling-bound}.
\end{proof}

By Lemma~\ref{lem:coupling-bound}, the product $P_n=P_n(X)$ has the same distribution as
$D_1\cdots D_n$ for each $n\in\N$. For each $n\in\N$, we define
\begin{equation}\label{eq:MV}
M_n\coloneqq\frac{n(n+1)}2+\log(n!)
\qquad\text{and}\qquad
V_n\coloneqq\frac{n(n+1)(2n+1)}6.
\end{equation}
Note that $\E W_n=n(n+1)/2$ and $\Var W_n=V_n$ by the independence of the $Y_j$, and hence
$M_n=\E W_n+\log(n!)$.

\begin{lemma}\label{lem:deviations}

Let $n\in\N$ and $B>1$, and suppose that $t\coloneqq\log B-M_n>0$. Then, we have
\begin{align}
\Prob(P_n>B)&\leq\min\Bigl\{1,\frac{V_n}{t^2}\Bigr\},\label{eq:chebyshev}\\
\Prob(P_n>B)&\leq\exp\Bigl(-\frac{t^2}{2(V_n+nt)}\Bigr).\label{eq:exponential}
\end{align}

\end{lemma}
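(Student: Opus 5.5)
By Lemma~\ref{lem:coupling-bound}, $P_n$ has the same distribution as $D_1\cdots D_n$, and $\log(D_1\cdots D_n)\leq W_n+\log(n!)$. Since $\log B=M_n+t=\E W_n+\log(n!)+t$, we get
\[
\Prob(P_n>B)\leq\Prob\bigl(W_n+\log(n!)>\log B\bigr)=\Prob\bigl(W_n-\E W_n>t\bigr).
\]
So both inequalities reduce to upper-tail bounds for the centered weighted sum $W_n-\E W_n=\sum_{j=1}^n(n-j+1)(Y_j-1)$ of independent centered exponentials. The weights are $c_j\coloneqq n-j+1\in\{1,\ldots,n\}$, and $\sum_j c_j^2=V_n$.

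For \eqref{eq:chebyshev}, the bound $1$ is trivial. The bound $V_n/t^2$ is Chebyshev's inequality, using $\Var W_n=V_n$; the one-sided event is contained in $\{|W_n-\E W_n|\geq t\}$. Cantelli's inequality would even give $V_n/(V_n+t^2)$, but Chebyshev suffices.

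For \eqref{eq:exponential}, I will use a Bernstein-type argument via the moment generating function. For $0<\lambda<1/n$, so that $\lambda c_j<1$ for all $j$, independence gives
\[
\E e^{\lambda(W_n-\E W_n)}=\prod_{j=1}^n\frac{e^{-\lambda c_j}}{1-\lambda c_j}.
\]
The key elementary inequality is that, for $0\leq u<1$,
\[
-u-\log(1-u)=\sum_{k\geq2}\frac{u^k}{k}\leq\frac{u^2}{2}\sum_{k\geq0}u^k=\frac{u^2}{2(1-u)}.
\]
Applying it with $u=\lambda c_j\leq\lambda n$ gives
\[
\log\E e^{\lambda(W_n-\E W_n)}\leq\frac{\lambda^2V_n}{2(1-\lambda n)}.
\]
Markov's inequality then yields
\[
\Prob(W_n-\E W_n>t)\leq\exp\Bigl(-\lambda t+\frac{\lambda^2V_n}{2(1-\lambda n)}\Bigr).
\]

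It remains to choose $\lambda=t/(V_n+nt)$, which lies in $(0,1/n)$. Then $1-\lambda n=V_n/(V_n+nt)$, so
\[
\frac{\lambda^2V_n}{2(1-\lambda n)}=\frac{t^2}{2(V_n+nt)},
\]
and the exponent becomes $-t^2/(V_n+nt)+t^2/(2(V_n+nt))=-t^2/(2(V_n+nt))$, which is exactly \eqref{eq:exponential}. This choice of $\lambda$ is the only point needing care, and the algebra closes cleanly. One should also note that the event $\{P_n>B\}$ and the coupling are compatible: $P_n$ and $D_1\cdots D_n$ are equal in distribution, so the probabilities coincide.
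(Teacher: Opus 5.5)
Your proof is correct and follows the paper's argument essentially step for step. You reduce to the event $\{W_n-\E W_n>t\}$ via the coupling bound, then apply Chebyshev's inequality for the first bound. For the second, you bound the moment generating function using $-\log(1-u)-u\leq u^2/(2(1-u))$ together with $c_j\leq n$, and you choose $\lambda=t/(V_n+nt)$, which is exactly the paper's choice of $\theta$.
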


\begin{proof}

By Lemma~\ref{lem:coupling-bound}, we may replace $P_n$ by $D_1\cdots D_n$. Then,
by~\eqref{eq:product-coupling-bound}, the event $\{P_n>B\}$ is contained in the event
$\{W_n-\E W_n>t\}$. Hence, \eqref{eq:chebyshev} follows from Chebyshev's inequality.

For~\eqref{eq:exponential}, let $0<\theta<1/n$. Recall that $\E e^{uY_1}=(1-u)^{-1}$ for
$0\leq u<1$. By the independence of the $Y_j$, it follows that
\begin{align}\label{eq:moment-bound}
\log\E\exp\bigl(\theta(W_n-\E W_n)\bigr)
&=\sum_{k=1}^{n}\bigl(-\log(1-\theta k)-\theta k\bigr)\nonumber\\
&\leq\sum_{k=1}^{n}\frac{\theta^2k^2}{2(1-\theta k)}
\leq\frac{\theta^2V_n}{2(1-\theta n)},
\end{align}
where we used
\[
-\log(1-u)-u=\sum_{j=2}^{\infty}\frac{u^j}j
\leq\frac12\sum_{j=2}^{\infty}u^j=\frac{u^2}{2(1-u)}\qquad(0\leq u<1).
\]
Therefore, by Markov's inequality applied to $\exp(\theta(W_n-\E W_n))$, we have
\[
\Prob(P_n>B)\leq\exp\Bigl(-\theta t+\frac{\theta^2V_n}{2(1-\theta n)}\Bigr).
\]
Now, put $\theta\coloneqq t/(V_n+nt)$. Then, $0<\theta<1/n$ and $1-\theta n=V_n/(V_n+nt)$, and so the
exponent on the right-hand side is equal to $-t^2/(2(V_n+nt))$. This proves~\eqref{eq:exponential}.
\end{proof}

Combining Lemmas~\ref{lem:word-count}, \ref{lem:transfer}(ii), and~\ref{lem:deviations}, we obtain
the following bound with two free parameters $B$ and $n$.

\begin{lemma}\label{lem:finite-bound}

Let $\Fcal$, $N$, $\Delta$, and $R$ be as in Theorem~\ref{thm:discrepancy}. Let $n\in\N$ and
$B>1$, and suppose that $t\coloneqq\log B-M_n>0$. Then, we have
\begin{equation}\label{eq:finite-bound}
\frac1N\#\{x\in\Fcal:\ell(x)<n\}
\leq\exp\Bigl(-\frac{t^2}{2(V_n+nt)}\Bigr)
+\frac{2B\exp(2\sqrt{\log B})}{R}.
\end{equation}

\end{lemma}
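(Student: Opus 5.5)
This is a direct combination of three earlier results, so the plan is to chain them in order with no new estimates. Fix $n\in\N$ and $B>1$ with $t=\log B-M_n>0$, as in the statement.

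First, apply Lemma~\ref{lem:transfer}(ii) to the given $\Fcal$, $N$, $\Delta$, $R$ with this $n$ and $B$. Since $B\geq1$, this gives
\[
\frac1N\#\{x\in\Fcal:\ell(x)<n\}\leq\Prob(P_n>B)+\frac{H(B)}R.
\]

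Next, bound the two terms on the right separately.
\begin{enumerate}
\item For the probability term, the hypotheses $B>1$ and $t>0$ are exactly those of Lemma~\ref{lem:deviations}. Its exponential bound~\eqref{eq:exponential} gives
\[
\Prob(P_n>B)\leq\exp\Bigl(-\frac{t^2}{2(V_n+nt)}\Bigr).
\]
\item For the counting term, Lemma~\ref{lem:word-count} gives $H(B)\leq2B\exp(2\sqrt{\log B})$. Dividing by $R>0$ yields the second term of~\eqref{eq:finite-bound}.
\end{enumerate}
Adding the two bounds gives~\eqref{eq:finite-bound}.

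There is no substantive obstacle. The only checks needed are that the hypotheses of each invoked lemma hold: $B>1$ implies $B\geq1$ for Lemmas~\ref{lem:word-count} and~\ref{lem:transfer}, and $t>0$ is assumed for Lemma~\ref{lem:deviations}. We also need $R=N/\Delta>0$, which holds since $N\geq1$ and $\Delta>0$.
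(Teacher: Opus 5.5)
Your proposal is correct and matches the paper's proof exactly: the paper also applies Lemma~\ref{lem:transfer}(ii) and bounds the two terms by~\eqref{eq:exponential} and~\eqref{eq:word-count}. The hypothesis checks you list ($B\geq1$, $t>0$, $R>0$) are the only ones required.
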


\begin{proof}

It suffices to bound the two terms on the right-hand side of~\eqref{eq:transfer-short}
by~\eqref{eq:exponential} and~\eqref{eq:word-count}, respectively.
\end{proof}

Lastly, we record the elementary discrepancy bound for the fractions with a fixed denominator.

\begin{lemma}\label{lem:grid-discrepancy}

Let $b\in\N$. Then, for all $0\leq u<v\leq1$, we have
\begin{equation}\label{eq:grid-count}
\bigl|\#\{1\leq a\leq b:u<a/b<v\}-b(v-u)\bigr|\leq1.
\end{equation}

\end{lemma}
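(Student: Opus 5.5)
The count is explicit, so the plan is a direct floor-function computation. Fix $0\leq u<v\leq1$. The condition $u<a/b<v$ is equivalent to $bu<a<bv$. So the quantity to control is the number of integers $a$ in the open interval $(bu,bv)$. We must also check the constraint $1\leq a\leq b$, and it is automatic. Since $u\geq0$, every integer $a>bu$ satisfies $a\geq1$. Since $v\leq1$, every integer $a<bv$ satisfies $a\leq b-1$. Hence
\[
\#\{1\leq a\leq b:u<a/b<v\}=\#\bigl(\Z\cap(bu,bv)\bigr).
\]

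Next I would use the standard fact about open intervals. For real $\alpha<\beta$, the number of integers in $(\alpha,\beta)$ is $\lceil\beta\rceil-\lfloor\alpha\rfloor-1$. Write $\lceil\beta\rceil=\beta+\eta$ and $\lfloor\alpha\rfloor=\alpha-\xi$ with $\eta,\xi\in[0,1)$. The count then equals
\[
(\beta-\alpha)+\eta+\xi-1.
\]
Since $\eta+\xi-1\in[-1,1)$, the deviation from $\beta-\alpha$ lies in $[-1,1)$. Applying this with $\alpha=bu$ and $\beta=bv$, so that $\beta-\alpha=b(v-u)$, gives~\eqref{eq:grid-count}.

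The only point needing care is the bookkeeping at the endpoints. Because the interval is open, endpoints that happen to be integers (for example $a=b$ when $v=1$) are correctly excluded. This is exactly why the formula $\lceil\beta\rceil-\lfloor\alpha\rfloor-1$ is the right one. I expect no real obstacle here.
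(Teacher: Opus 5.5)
Your proposal is correct and follows essentially the same route as the paper: you identify the count with the number of integers in $(bu,bv)$, write it as $\lceil bv\rceil-\lfloor bu\rfloor-1$, and bound the two fractional-part corrections in $[0,1)$. Your explicit check that the constraint $1\leq a\leq b$ is automatic for integers in $(bu,bv)$ is a small detail the paper leaves implicit.
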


\begin{proof}

The count on the left-hand side of~\eqref{eq:grid-count} is the number of integers in the open
interval $(bu,bv)$, which is equal to $\lceil bv\rceil-\lfloor bu\rfloor-1$. As
$0\leq\lceil bv\rceil-bv<1$ and $0\leq bu-\lfloor bu\rfloor<1$, this number differs from $b(v-u)$
by at most $1$, as desired.
\end{proof}

\section{Proofs of main results}\label{sec:proofs}

In this section, we prove the main results stated in Section~\ref{sec:introduction}. We first prove
Theorem~\ref{thm:discrepancy} in Section~\ref{subsec:proof-general}, and then deduce the other
results from it in Section~\ref{subsec:proof-others}.

\subsection{Proof of Theorem~\ref{thm:discrepancy}}\label{subsec:proof-general}

Throughout this subsection, let $\Fcal$, $N$, $\Delta$, $R$, and $L$ be as in
Theorem~\ref{thm:discrepancy}. We prove parts (i) and (ii) of Theorem~\ref{thm:discrepancy}
separately.

\begin{proof}[Proof of Theorem~\ref{thm:discrepancy}\textup{(i)}]

If $L<9$, then the right-hand side of~\eqref{eq:general-mean} is negative, and there is nothing to
prove. We may therefore assume that $L\geq9$, so that $R=e^L$. We define
\begin{equation}\label{eq:choice-Bm}
B\coloneqq e^{L-3\sqrt L},\qquad \tau_B\coloneqq\sqrt{2\log B},\qquad
m\coloneqq\max\bigl\{0,\lfloor\tau_B-2\log(\tau_B+1)-3\rfloor\bigr\}.
\end{equation}
Clearly, $B\geq1$, since $L\geq9$. By Lemma~\ref{lem:word-count}, we have
\begin{equation}\label{eq:word-error}
\frac{H(B)}R
\leq\frac{2B\exp(2\sqrt{\log B})}{R}
=2\exp\bigl(-3\sqrt L+2\sqrt{L-3\sqrt L}\bigr)
\leq2e^{-\sqrt L}.
\end{equation}

Assume first that $m\geq1$. We show that $M_m\leq\log B$. As $\log B=\tau_B^2/2$ and
$m\leq\tau_B-2\log(\tau_B+1)-3$, we have
\[
\log B-\frac{m^2}2
=\frac{(\tau_B-m)(\tau_B+m)}2
\geq m\bigl(2\log(\tau_B+1)+3\bigr)
\geq m\log m+\frac m2.
\]
Since $\log(m!)\leq m\log m$, it follows that $M_m\leq\log B$. Now, let $1\leq n<m$, and put
$j\coloneqq m-n$. Then, we have
\begin{equation}\label{eq:distance-from-Mn}
\log B-M_n\geq M_m-M_n
=\sum_{k=n+1}^{m}(k+\log k)\geq j(n+1).
\end{equation}
Since $V_n\leq n(n+1)^2/3$, it follows from~\eqref{eq:chebyshev} that
\begin{equation}\label{eq:mean-tail}
\Prob(P_n>B)\leq\min\Bigl\{1,\frac{n(n+1)^2}{3j^2(n+1)^2}\Bigr\}
\leq\min\Bigl\{1,\frac m{3j^2}\Bigr\}.
\end{equation}
For $n=m$, we use the trivial bound $\Prob(P_m>B)\leq1$. Since the function
$u\mapsto\min\{1,m/(3u^2)\}$ is non-increasing on $(0,\infty)$, we find that
\begin{align}\label{eq:sum-mean-tails}
\sum_{n=1}^{m}\Prob(P_n>B)
&\leq1+\sum_{j=1}^{\infty}\min\Bigl\{1,\frac m{3j^2}\Bigr\}\nonumber\\
&\leq1+\int_0^{\infty}\min\Bigl\{1,\frac m{3u^2}\Bigr\}\,du
=1+2\sqrt{m/3},
\end{align}
where the integral is evaluated by splitting it at $u=\sqrt{m/3}$. Consequently,
\[
\E K_B\geq\sum_{n=1}^{m}\Prob(P_n\leq B)
\geq m-1-2\sqrt{m/3}.
\]
This inequality holds trivially when $m=0$. Therefore, by Lemma~\ref{lem:transfer}(i)
and~\eqref{eq:word-error}, we have
\begin{equation}\label{eq:mean-intermediate}
\frac1N\sum_{x\in\Fcal}\ell(x)
\geq m-2\sqrt{m/3}-2e^{-\sqrt L}.
\end{equation}

Put $\tau\coloneqq\sqrt{2L}$. We claim that
\begin{equation}\label{eq:T-estimates}
\tau_B\geq\tau-3\sqrt2,
\qquad m\geq\tau_B-2\log(\tau_B+1)-4,
\qquad m\leq\tau_B\leq\tau.
\end{equation}
Indeed, the first inequality follows from
\[
\tau-\tau_B=\frac{\tau^2-\tau_B^2}{\tau+\tau_B}=\frac{6\sqrt L}{\tau+\tau_B}
\leq\frac{6\sqrt L}{\sqrt{2L}}=3\sqrt2,
\]
and the other inequalities are immediate from~\eqref{eq:choice-Bm}. By~\eqref{eq:T-estimates}, we
have $\sqrt{m/3}\leq\sqrt{\tau/3}$ and $m\geq\tau-3\sqrt2-2\log(\tau+1)-4$. Hence, the right-hand
side of~\eqref{eq:mean-intermediate} is at least
\begin{equation}\label{eq:mean-before-constants}
\tau-2\log(\tau+1)-\frac2{\sqrt3}\sqrt\tau
-3\sqrt2-4-2e^{-\sqrt L}.
\end{equation}
As $L\geq9$, we have $(\tau+1)^2\leq8L$ and $e^{-\sqrt L}\leq e^{-3}$. Furthermore,
$\log L\leq(4/e)L^{1/4}$, which follows by maximizing $(\log u)/u^{1/4}$ over $u\geq1$. Thus,
\[
2\log(\tau+1)\leq\log L+\log8\leq\frac4eL^{1/4}+\log8
\qquad\text{and}\qquad
\frac2{\sqrt3}\sqrt\tau=\frac{2\cdot2^{1/4}}{\sqrt3}L^{1/4}.
\]
Note that
\[
\frac{2\cdot2^{1/4}}{\sqrt3}+\frac4e<3
\qquad\text{and}\qquad
3\sqrt2+4+\log8+2e^{-3}<11.
\]
Substituting these estimates into~\eqref{eq:mean-before-constants}, we conclude
that~\eqref{eq:general-mean} holds.
\end{proof}

\begin{proof}[Proof of Theorem~\ref{thm:discrepancy}\textup{(ii)}]

Fix $0<c<\sqrt2$, and put $\delta\coloneqq1-c^2/2>0$. Suppose that $R\geq e^9$, i.e.,
$L=\log R\geq9$, and let $B$ be as in~\eqref{eq:choice-Bm}. We define
\[
n\coloneqq\lceil c\sqrt L\rceil
\qquad\text{and}\qquad
t\coloneqq\log B-M_n.
\]
We first show that
\begin{equation}\label{eq:positive-t}
t\geq\frac\delta2L
\end{equation}
for all sufficiently large $L$. As $n\leq c\sqrt L+1$ and $\log(n!)\leq n\log n$, we have
\[
M_n\leq\frac{(c\sqrt L+1)(c\sqrt L+2)}2+(c\sqrt L+1)\log(c\sqrt L+1),
\]
and hence
\[
t\geq\delta L-\Bigl(3+\frac{3c}2\Bigr)\sqrt L-1-(c\sqrt L+1)\log(c\sqrt L+1).
\]
After division by $L$, the terms subtracted from $\delta L$ tend to $0$ as $L\to\infty$, which
proves~\eqref{eq:positive-t}.

Note also that $n\leq(c+1)\sqrt L$, $V_n\leq n^3$, and $t\leq\log B\leq L$, where $V_n\leq n^3$
follows from $6n^3-n(n+1)(2n+1)=n(n-1)(4n+1)\geq0$. It follows that
\begin{equation}\label{eq:variance-upper}
V_n+nt\leq\bigl((c+1)^3+(c+1)\bigr)L^{3/2}.
\end{equation}
Now, suppose that~\eqref{eq:positive-t} holds. Then, $t>0$, and so, by Lemma~\ref{lem:finite-bound},
\eqref{eq:positive-t}, \eqref{eq:variance-upper}, and~\eqref{eq:word-error}, we have
\begin{align*}
\frac1N\#\{x\in\Fcal:\ell(x)<n\}
&\leq\exp\Bigl(-\frac{\delta^2\sqrt L}{8\bigl((c+1)^3+(c+1)\bigr)}\Bigr)+2e^{-\sqrt L}\\
&=e^{-\kappa_c\sqrt L}+2e^{-\sqrt L}
\leq3e^{-\kappa_c\sqrt L},
\end{align*}
where the last inequality follows from $\kappa_c<1$. Lastly, if an integer is smaller than
$c\sqrt L$, then it is smaller than $\lceil c\sqrt L\rceil=n$. Therefore,~\eqref{eq:general-short}
holds for all sufficiently large $R$, where the threshold depends only on $c$. This completes the
proof.
\end{proof}

\subsection{Proofs of the other results}\label{subsec:proof-others}

\begin{proof}[Proofs of Theorems~\ref{thm:mean-grid} and~\ref{thm:short-grid}]

Let $b\geq2$ be an integer, and let $\Fcal\coloneqq\{a/b:1\leq a\leq b\}$. Then, $N=b$,
and~\eqref{eq:discrepancy} holds with $\Delta=1$ by Lemma~\ref{lem:grid-discrepancy}. Hence, $R=b$
and $L=\log b$. Since $\ell(a/b)=\ell(a,b)$ by Lemma~\ref{lem:algorithm}(ii), part (i) of
Theorem~\ref{thm:discrepancy} gives~\eqref{eq:main-mean}, and part (ii) gives~\eqref{eq:short-grid}
with $b_c\coloneqq\max\{2,\lceil R_c\rceil\}$. Finally, on dividing~\eqref{eq:main-mean} by
$\sqrt{\log b}$ and letting $b\to\infty$, we see that~\eqref{eq:mean-liminf} holds.
\end{proof}

\begin{proof}[Proof of Corollary~\ref{cor:primitive}]

(i) Let $b\geq2$ be an integer, and let $0\leq u<v\leq1$ be arbitrary. By M\"obius inversion, we
have
\begin{align*}
&\#\{1\leq a\leq b:\gcd(a,b)=1,\ u<a/b<v\}\\
&\hspace{2cm}=\sum_{d\mid b}\mu(d)\,
\#\{1\leq a\leq b:d\mid a,\ u<a/b<v\}.
\end{align*}
For each $d\mid b$, on writing $a=da'$ and applying Lemma~\ref{lem:grid-discrepancy} with $b/d$ in
place of $b$, we see that the count on the right-hand side differs from $(b/d)(v-u)$ by at most
$1$. Recall that
\[
\sum_{d\mid b}\mu(d)\frac bd=\varphi(b)
\qquad\text{and}\qquad\sum_{d\mid b}|\mu(d)|=2^{\omega(b)}.
\]
Thus, the multiset $\{a/b:1\leq a\leq b,\ \gcd(a,b)=1\}$ satisfies~\eqref{eq:discrepancy} with
$N=\varphi(b)$ and $\Delta=2^{\omega(b)}$. Hence, $R=R_b$, and~\eqref{eq:primitive-bound} follows
from Theorem~\ref{thm:discrepancy}(i).

Next, we show that
\begin{equation}\label{eq:effective-size}
\lim_{b\to\infty}\frac{\log R_b}{\log b}=1.
\end{equation}
Let $\eta>0$. Since $2\leq p^\eta$ and $(1-1/p)^{-1}\leq p^\eta$ for all sufficiently large primes
$p$, by separating the finitely many remaining primes in the products
$2^{\omega(b)}=\prod_{p\mid b}2$ and $b/\varphi(b)=\prod_{p\mid b}(1-1/p)^{-1}$, we can find
constants $C_\eta,C_\eta'>0$, depending only on $\eta$, such that
\[
2^{\omega(b)}\leq C_\eta b^\eta
\qquad\text{and}\qquad\frac b{\varphi(b)}\leq C_\eta'b^\eta
\]
for all $b\in\N$. It follows that $(C_\eta C_\eta')^{-1}b^{1-2\eta}\leq R_b\leq b$,
and~\eqref{eq:effective-size} holds, since $\eta>0$ is arbitrary. Combining~\eqref{eq:effective-size}
with~\eqref{eq:primitive-bound}, we deduce~\eqref{eq:primitive-liminf}.

(ii) Let $0<c<\sqrt2$, and put $c'\coloneqq(c+\sqrt2)/2$, so that $c<c'<\sqrt2$.
By~\eqref{eq:effective-size}, we can find an integer $b_c^*\geq2$ such that, for every integer
$b\geq b_c^*$, we have $R_b\geq R_{c'}$,
\[
c\sqrt{\log b}\leq c'\sqrt{\log R_b},
\qquad\text{and}\qquad
\sqrt{\log R_b}\geq\frac12\sqrt{\log b}.
\]
Hence, by applying Theorem~\ref{thm:discrepancy}(ii) with $c'$ in place of $c$ to the multiset in
part (i), we see that~\eqref{eq:primitive-short} holds with $\gamma_c\coloneqq\kappa_{c'}/2$. This
completes the proof.
\end{proof}

\begin{proof}[Proof of Corollary~\ref{cor:AP}]

Let $0\leq u<v\leq1$ be arbitrary. On writing $a=h+mk$ with $k\in\Z$, we see that the elements of
$\mathcal A(b;m,h)$ in the open interval $(bu,bv)$ correspond to the integers $k$ in an open
interval of length $b(v-u)/m$. Note that the number of integers in an open interval of length
$\lambda$ differs from $\lambda$ by at most $1$. Hence,
\[
\Bigl|\#\{a\in\mathcal A(b;m,h):u<a/b<v\}
-\frac bm(v-u)\Bigr|\leq1.
\]
Similarly, counting the elements of $\mathcal A(b;m,h)$ in $(0,b]$, we find that $|N-b/m|\leq1$.
Consequently, the multiset $\{a/b:a\in\mathcal A(b;m,h)\}$ satisfies~\eqref{eq:discrepancy} with
$\Delta=2$. Therefore, $R=N/2$, and both parts of the corollary follow from
Theorem~\ref{thm:discrepancy}.
\end{proof}

\section{Further remarks}\label{sec:identities}

In this section, we record some exact identities for the total lengths of finite Pierce expansions.
All of them are elementary consequences of the iteration~\eqref{eq:intro-remainders}. Among others,
Proposition~\ref{prop:divisor-decomposition} relates the averages $Q(b)$ and $Q^*(b)$.

For each $b\in\N$, we define
\begin{equation}\label{eq:AC}
A(b)\coloneqq\sum_{a=1}^{b}\ell(a,b)
\qquad\text{and}\qquad
C(b)\coloneqq\sum_{\substack{1\leq a\leq b\\ \gcd(a,b)=1}}\ell(a,b).
\end{equation}
By definition, we have $A(1)=C(1)=1$, and $A(b)=bQ(b)$ and $C(b)=\varphi(b)Q^*(b)$ for $b\geq2$.

\begin{proposition}\label{prop:divisor-decomposition}

The following hold.
\begin{enumerate}
\item For any $a,b,k\in\N$ with $a\leq b$, we have
\begin{equation}\label{eq:scaling}
\ell(ka,kb)=\ell(a,b).
\end{equation}
\item For every $b\in\N$, we have
\begin{equation}\label{eq:divisor-decomposition}
A(b)=\sum_{d\mid b}C(d)
\qquad\text{and}\qquad
C(b)=\sum_{d\mid b}\mu(b/d)A(d).
\end{equation}
\end{enumerate}

\end{proposition}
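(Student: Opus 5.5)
For part (i), I would work directly with the remainder iteration~\eqref{eq:intro-remainders}. Let $r_0=a$, $r_{j+1}=b\bmod r_j$ be the remainders for $(a,b)$, and let $r_0'=ka$, $r_{j+1}'=kb\bmod r_j'$ be those for $(ka,kb)$. I will show by induction on $j$ that $r_j'=kr_j$ for every $j$ up to and including the first index at which $r_j=0$. The base case $j=0$ is immediate. For the inductive step, suppose $r_j'=kr_j$ with $r_j>0$. Write $b=qr_j+r_{j+1}$ with $0\leq r_{j+1}<r_j$. Multiplying by $k$ gives $kb=q(kr_j)+kr_{j+1}$ with $0\leq kr_{j+1}<kr_j$. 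Hence $kb\bmod kr_j=kr_{j+1}$, that is, $r_{j+1}'=kr_{j+1}$. It follows that $r_j'=0$ exactly when $r_j=0$, so the first zero occurs at the same index and $\ell(ka,kb)=\ell(a,b)$. Alternatively, one can observe that $\ell(x)$ depends only on $x=a/b=ka/kb$, by Lemma~\ref{lem:algorithm}(ii) and the remark following it.

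For part (ii), I would partition the numerators $1\leq a\leq b$ according to $g\coloneqq\gcd(a,b)$, which always divides $b$. Write $a=ga'$ and $b=gd$ with $d=b/g$. Then $1\leq a'\leq d$ and $\gcd(a',d)=1$. Conversely, for each divisor $d\mid b$ and each $a'$ with $1\leq a'\leq d$ and $\gcd(a',d)=1$, the number $a=(b/d)a'$ is a numerator with $\gcd(a,b)=b/d$. These two maps are inverse to each other, so we have a bijection. By part (i), $\ell(a,b)=\ell(a',d)$ under this correspondence. Summing over all $a$ therefore gives
\[
A(b)=\sum_{d\mid b}\ \sum_{\substack{1\leq a'\leq d\\ \gcd(a',d)=1}}\ell(a',d)=\sum_{d\mid b}C(d).
\]
One small point needs checking: the case $d=1$ contributes $a'=1$ and $\ell(1,1)=1=C(1)$, which is consistent with the convention $A(1)=C(1)=1$. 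This term corresponds to the numerator $a=b$.

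The second identity in~\eqref{eq:divisor-decomposition} then follows from the first by M\"obius inversion applied to the arithmetic functions $A$ and $C$. The only real care needed in the whole argument is in verifying the bijection, including the edge case $d=1$. Everything else is routine.
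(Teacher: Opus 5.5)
Your proposal is correct and follows the paper's proof: the remainders for $(ka,kb)$ are $k$ times those for $(a,b)$ by induction, the numerators are classified by $\gcd(a,b)=b/d$ with the bijection $a\mapsto a/(b/d)$, and the second identity comes from M\"obius inversion. Your explicit check of the inductive step, of the inverse map, and of the $d=1$ term only fills in details that the paper leaves implicit.
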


\begin{proof}

(i) By induction, each remainder in the iteration~\eqref{eq:intro-remainders} for $(ka,kb)$ is $k$
times the corresponding remainder for $(a,b)$, which proves~\eqref{eq:scaling}.

(ii) For the first identity in~\eqref{eq:divisor-decomposition}, we classify the numerators
$1\leq a\leq b$ according to the reduced denominator $d\coloneqq b/\gcd(a,b)$ of $a/b$. For each
$d\mid b$, the map $a\mapsto a/(b/d)$ is a bijection from $\{1\leq a\leq b:\gcd(a,b)=b/d\}$ onto
$\{1\leq a'\leq d:\gcd(a',d)=1\}$, and $\ell(a,b)=\ell(a/(b/d),d)$ by part (i). Thus, the numerators
with reduced denominator $d$ contribute $C(d)$ to $A(b)$, which proves the first identity. The
second identity follows from the first one by M\"obius inversion.
\end{proof}

Note that~\eqref{eq:divisor-decomposition} separates the contribution of the numerators which are
not coprime to $b$ from that of the coprime numerators. However, it does not by itself determine the
asymptotic behavior of either average.

Let $b\in\N$. For each $1\leq r\leq b$, let $\nu_b(r)$ denote the number of numerators
$a\in\{1,\ldots,b\}$ whose orbit under~\eqref{eq:intro-remainders} visits $r$, i.e., $r_j=r$ for some
$j$. As the positive remainders are strictly decreasing, each orbit visits $r$ at most once. For
each $j\in\N$, let $T_b(j)$ be the total number of occurrences of the digit $j$ in the Pierce
expansions of $a/b$, $1\leq a\leq b$, as in the proof of~\cite[Theorem~7]{ES91}. Then, we have
\begin{equation}\label{eq:digit-remainder-count}
T_b(j)=\sum_{\substack{1\leq r\leq b\\ b/(j+1)<r\leq b/j}}\nu_b(r),
\end{equation}
since, by Lemma~\ref{lem:algorithm}(i), the digit produced from a positive remainder $r$ is
$\lfloor b/r\rfloor$.

\begin{proposition}\label{prop:ancestors}

Let $b\in\N$. Then, for each $1\leq r\leq b$, we have
\begin{equation}\label{eq:ancestor-recurrence}
\nu_b(r)=1+\sum_{\substack{r<s\leq b\\ s\mid b-r}}\nu_b(s).
\end{equation}
Moreover, $A(b)=\sum_{r=1}^{b}\nu_b(r)$.

\end{proposition}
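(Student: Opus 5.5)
The plan is to characterize the predecessors of a remainder and then count orbits via a tree structure. Fix $b$ and consider the map $\phi(s)\coloneqq b\bmod s$ on $\{1,\ldots,b\}$. If $\phi(s)=r>0$, then $s>r$ and $s\mid b-r$. Conversely, if $r<s\leq b$ and $s\mid b-r$, then $b\equiv r\pmod s$ with $0<r<s$, so $b\bmod s=r$. Hence, for $r\geq1$, the set of $s$ with $\phi(s)=r$ is exactly $\{r<s\leq b:s\mid b-r\}$. For $r=b$ this set is empty, since $s>b$ is impossible, and the sum in~\eqref{eq:ancestor-recurrence} is then empty.

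Next I would establish the recurrence. An orbit starting at $a$ visits $r$ if and only if either $a=r$, or the orbit visits some $s>r$ with $\phi(s)=r$. Since the positive remainders are strictly decreasing, the remainder immediately preceding $r$ in the orbit is a unique $s$. So the numerators $a$ whose orbits visit $r$ split disjointly into two parts: the single numerator $a=r$, and, for each predecessor $s$ of $r$, the numerators whose orbits visit $s$ and then step to $r$. These classes are disjoint because the predecessor of $r$ in a given orbit is unique, and $a=r$ is not counted in any class with $s>r$ because the orbit of $r$ starts at $r$ and never visits any $s>r$. Every orbit that visits $s$ with $\phi(s)=r$ then visits $r$. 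Counting gives $\nu_b(r)=1+\sum_s\nu_b(s)$ with $s$ ranging over the predecessor set found above, which is~\eqref{eq:ancestor-recurrence}.

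For the final identity, $\ell(a,b)$ is the least $j$ with $r_j=0$, so the orbit of $a$ visits exactly $\ell(a,b)$ positive remainders $r_0,\ldots,r_{\ell-1}$, and these are distinct and lie in $\{1,\ldots,b\}$. Double counting the pairs $(a,r)$ for which the orbit of $a$ visits $r$ then gives $A(b)=\sum_a\ell(a,b)=\sum_r\nu_b(r)$.

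The only delicate point is the disjointness and uniqueness argument in the recurrence, and it follows directly from strict monotonicity; I expect no real obstacle.
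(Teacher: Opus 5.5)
Your proof is correct and takes essentially the same route as the paper. You identify the immediate predecessors of $r$ as $\{r<s\leq b: s\mid b-r\}$, partition the orbits visiting $r$ (other than the one starting at $r$) by their unique immediate predecessor via strict monotonicity, and finish with the same double count of pairs $(a,r)$; your explicit remark that the orbit starting at $r$ visits no $s>r$ spells out a point the paper leaves implicit.
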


\begin{proof}

Let $1\leq r\leq b$. The orbit starting from $a=r$ visits $r$, and it contributes $1$ to $\nu_b(r)$.
Every other orbit visiting $r$ has a unique immediate predecessor $s$ of $r$, i.e., a remainder $s$
with $r<s\leq b$ and $b\bmod s=r$. For $s>r$, the condition $b\bmod s=r$ is equivalent to
$s\mid b-r$ (cf.\ \cite[Lemma~4]{BV26}). Conversely, every orbit visiting such an $s$ visits $r$ at
the next step. Furthermore, no orbit visits two distinct immediate predecessors $s>s'$ of $r$.
Indeed, after visiting $s$, the orbit visits $r<s'$ at the next step, and the remainders are
strictly decreasing. Hence, the sets of numerators whose orbits visit the various immediate
predecessors of $r$ are pairwise disjoint, which proves~\eqref{eq:ancestor-recurrence}.

For the last identity, we count in two ways the pairs $(a,r)$ such that $r$ is a positive remainder
in the orbit of $a$. Since the orbit of $a$ contains exactly $\ell(a,b)$ positive remainders, the
number of such pairs is equal to $A(b)$. On the other hand, by definition, it is also equal to
$\sum_{r=1}^{b}\nu_b(r)$. This completes the proof.
\end{proof}


\begin{thebibliography}{99}

\bibitem{Ahn23}
M.~W.~Ahn,
\emph{On the error-sum function of Pierce expansions},
J. Fractal Geom. \textbf{10} (2023), 389--421.

\bibitem{BGJK26}
O.~Baraskar, P.~Gokhale, S.~Jain, and A.~Kie\.{z}un,
\emph{Asymptotic behavior of iterated sets of remainders},
preprint, 2026, \href{https://arxiv.org/abs/2609.24412}{arXiv:2609.24412}.

\bibitem{BV26}
O.~Baraskar and I.~Vukusic,
\emph{Bounds for sets of remainders},
J. Integer Seq. \textbf{29} (2026), Article 26.1.4, 18 pp.

\bibitem{CP22}
Z.~Chase and M.~Pandey,
\emph{On the length of Pierce expansions},
preprint, 2022, \href{https://arxiv.org/abs/2211.08374}{arXiv:2211.08374}.

\bibitem{ES91}
P.~Erd\H{o}s and J.~O.~Shallit,
\emph{New bounds on the length of finite Pierce and Engel series},
J. Th\'eor. Nombres Bordeaux \textbf{3} (1991), 43--53.

\bibitem{Kes96}
V.~Ke\v{s}elj,
\emph{Length of finite Pierce series: theoretical analysis and numerical computations},
Technical Report CS-96-21, University of Waterloo, 1996.

\bibitem{Mays87}
M.~E.~Mays,
\emph{Iterating the division algorithm},
Fibonacci Quart. \textbf{25} (1987), 204--213.

\bibitem{Pie29}
T.~A.~Pierce,
\emph{On an algorithm and its use in approximating roots of algebraic equations},
Amer. Math. Monthly \textbf{36} (1929), 523--525.

\bibitem{Ran38}
R.~A.~Rankin,
\emph{The difference between consecutive prime numbers},
J. London Math. Soc. \textbf{13} (1938), 242--247.

\bibitem{Sha86}
J.~O.~Shallit,
\emph{Metric theory of Pierce expansions},
Fibonacci Quart. \textbf{24} (1986), 22--40.

\end{thebibliography}
\end{document}